\newtheorem{theorem}[subsection]{Theorem}
\newtheorem{definition}[subsection]{Definition}
\newtheorem{example}[subsection]{Example}
\newtheorem{question}[subsection]{Question}
\newtheorem{cor}[subsection]{Corollary}
\newtheorem{lemma}[subsection]{Lemma}
\newtheorem{proposition}[subsection]{Proposition}
\newtheorem{sublemma}[subsubsection]{Lemma}
\newtheorem{subproposition}[subsubsection]{Proposition}
\def\X{{X}}
\def\Tc{{\mathscr{T}}}
\def\Xc{{\mathscr{X}}}
\def\Uc{{\mathscr{U}}}
\def\Yc{{\mathscr{Y}}}
\def\Zc{{\mathcal{Z}}}
\def\Hc{{\mathscr{H}}}
\def\arrowdown#1#2{\Big\downarrow \rlap{$\vcenter{\hbox{$\scriptstyle#2$}}$}
{\hbox to -10pt{\hss{$\vcenter{\hbox{$\scriptstyle#1$}}$}}}}
\def\arrowup#1#2{\Big\uparrow \rlap{$\vcenter{\hbox{$\scriptstyle#2$}}$}
{\hbox to -10pt{\hss{$\vcenter{\hbox{$\scriptstyle#1$}}$}}}}
\numberwithin{equation}{section}
\begin{document}
\title [Finite morphisms to projective space and capacity theory]{Finite morphisms to projective space and capacity theory}

\date{\today}

\begin{abstract}We study conditions on a commutative ring $R$ which are equivalent to the 
following requirement:  Whenever $\Xc$ is a projective scheme over $S = \mathrm{Spec}(R)$ of fiber dimension $\le d$
for some integer $d \ge 0$, there is a finite morphism from $\Xc$ to $\mathbb{P}^d_S$ over $S$ such
that the pullbacks of coordinate hyperplanes give prescribed subschemes of $\Xc$ provided these subschemes
satisfy certain natural conditions. We use our results to define a new kind of capacity
for adelic subsets of  projective flat schemes $\X$ 
over global fields.  This capacity can be  used to generalize the converse part of
the Fekete-Szeg\H{o} Theorem. 

\end{abstract}

\author[T. Chinburg]{T. Chinburg}\thanks{Chinburg is supported in part by NSF Grant DMS-1100355}
\address{Ted Chinburg, Dept. of Math\\Univ. of Penn.\\Phila. PA. 19104, U.S.A.}
\email{ted@math.upenn.edu}

\author[L. Moret-Bailly]{L. Moret-Bailly}\thanks{Moret-Bailly is supported in part by the ANR project ``Points entiers et points rationnels''}
\address{Laurent Moret-Bailly, IRMAR, Universit\'e de Rennes 1 \\Campus de Beaulieu,
F-35042 Rennes Cedex, France
}
\email{Laurent.Moret-Bailly@univ-rennes1.fr}

\author[G. Pappas]{G. Pappas}
\thanks{Pappas is supported in part by NSF Grant DMS11-02208}
\address{Georgios Pappas, Dept. of Math\\Michigan State Univ.\\East Lansing, MI 48824, U.S.A.  }
\email{pappas@math.msu.edu}

\author[M. Taylor]{M. J. Taylor}
\address{Martin J. Taylor, School of Mathematics\\ Merton College, Univ. of Oxford\\ Merton Street, Oxford OX1 4JD, U.K.}
\email{martin.taylor@merton.ox.ac.uk}

\date{\today}

\maketitle



\section{Introduction}
\label{s:intro}

Let $R$ be a commutative ring.  Suppose $f:\Xc \to S = \mathrm{Spec}(R)$ 
is a projective morphism which has fiber dimension $\le d$ in the sense
that  every fiber of $f$ has dimension $\le d$ at each of its points.
(If $d < 0$ this just means that $\Xc$ is empty.)   
Fix a line bundle $\mathscr{L}$
on $\Xc$ which is ample relative to $f$.  Suppose $i$ is an integer in
the range $0 \le i \le d+1$.  Let $(h_1,\ldots,h_i)$ be a sequence of
sections of $\mathscr{L}$ such that when $\Xc_j$
is the zero locus of $h_j$, then $\cap_{j = 1}^i \Xc_j$ has fiber dimension
$\le d -i$.  

\begin{definition}
\label{def:coordhyp}  The ring $R$ has the coordinate hyperplane property {\rm (F)} 
if for all $\Xc$, $\mathscr{L}$, $i$, $\{h_j\}_{j = 1}^i$ and $\{\Xc_j\}_{j = 1}^i$ as above, the following is true.
There is a finite $S$-morphism
$\pi:\Xc \to \mathbb{P}^d_S$ and a set of homogenous coordinates $(y_1:\cdots:y_{d+1})$ for $\mathbb{P}^d_S$ 
such that for each $1 \le j \le i$, the support of $\Xc_j$ is 
the same as that of the subscheme of $\Xc$ defined by $\pi^* y_j  = 0$.
\end{definition}

Recall that $\mathrm{Pic}(R)$ is the group of isomorphism classes of invertible sheaves on $S$, or equivalently, the group
of isomorphism classes of rank one projective $R$-modules under tensor product.  Our first Theorem is the following result:

\begin{theorem}
\label{thm:main}
The following properties of a commutative  ring $R$ are equivalent:
\begin{enumerate}
\item[1.] {\rm (Property F)} The coordinate hyperplane property.
\item[2.]  {\rm (Property P)} For every finite $R$-algebra $R'$, $\mathrm{Pic}(R')$ is a torsion
group.
\item[3.] {\rm (Property S)} If $\Uc$ is an open subscheme of $\mathbb{P}^1_S$ which surjects onto
$S = \mathrm{Spec}(R)$, there is an $S$-morphism $\Yc \to \Uc$ for which $\Yc$ is finite,
locally free and surjective over $S$.
\end{enumerate}
\end{theorem}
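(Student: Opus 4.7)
My plan is to establish the two equivalences $(F)\Leftrightarrow(P)$ and $(P)\Leftrightarrow(S)$; the main technical directions are $(P)\Rightarrow(F)$ and $(S)\Rightarrow(P)$. For $(F)\Rightarrow(P)$: given a finite $R$-algebra $R'$ and $M\in\mathrm{Pic}(R')$, I take $\Xc:=\mathbb{P}_{R'}(\O_{R'}\oplus M)$, viewed as a projective $S$-scheme of fiber dimension $\le 1$, with the relatively ample line bundle $\Lc:=\O_\Xc(1)$. The global section $(1,0)\in\Gamma(\Xc,\Lc)=R'\oplus\Gamma(R',M^{-1})$ vanishes precisely on the infinity section $\sigma_\infty\cong\mathrm{Spec}(R')$, of fiber dimension $0$ over $S$. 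Applying $(F)$ with $d=i=1$ produces a finite morphism $\pi:\Xc\to\mathbb{P}^1_S$ and coordinates $(y_1:y_2)$ with $\pi^{-1}(y_1=0)$ supported on $\sigma_\infty$. Using $\mathrm{Pic}(\Xc)=\mathrm{Pic}(R')\oplus\mathbb{Z}\cdot[\O_\Xc(1)]$ and matching the divisor class of $\{\pi^*y_1=0\}$ (equal to $k[\sigma_\infty]=k\cdot[\O_\Xc(1)]$ for some $k\ge 1$) with the class of $\pi^*\O_{\mathbb{P}^1}(1)$ forces $\pi^*\O_{\mathbb{P}^1}(1)\cong\O_\Xc(k)$ with trivial $\mathrm{Pic}(R')$-component; then $\pi^*y_2|_{\sigma_\infty}$ is a nowhere vanishing section of $\O_\Xc(k)|_{\sigma_\infty}=M^{-k}$, so $M^k\cong\O_{R'}$ and $M$ is torsion.

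For the main direction $(P)\Rightarrow(F)$, I proceed by induction on $d-i$: after replacing each $h_j$ by $h_j^N$ for a common degree $N$, I extend $h_1,\ldots,h_i$ to $h_1,\ldots,h_{d+1}$ of degree $N$ with empty common zero locus. Intermediate extensions reducing the fiber dimension of $Z_k:=\bigcap_{j\le k}\{h_j=0\}$ use a genericity argument in the linear system $|\Lc^N|_{Z_k}|$ (for $N$ large: Serre vanishing lifts sections from $Z_k$ to $\Xc$, and prime-avoidance targets the generic points of top-dimensional components of the fibers of $Z_k\to S$). The critical final step uses $(P)$: since $Z_d$ is proper of fiber dimension $\le 0$ over $S$, it is finite, so $R_{Z_d}=\Gamma(Z_d,\O_{Z_d})$ is a finite $R$-algebra and $\Lc|_{Z_d}$ is torsion in $\mathrm{Pic}(R_{Z_d})$; thus some $\Lc^N|_{Z_d}$ admits a nowhere vanishing section, which lifts by Serre vanishing to $h_{d+1}\in H^0(\Xc,\Lc^N)$. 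The resulting morphism $\pi=(h_1^N:\cdots:h_{d+1}):\Xc\to\mathbb{P}^d_S$ is proper and quasi-finite (by the general-position construction), hence finite, with $\pi^*y_j$ having the prescribed support for $j\le i$.

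For $(P)\Leftrightarrow(S)$: the direction $(P)\Rightarrow(S)$ mirrors the final step of $(P)\Rightarrow(F)$ in dimension one—the reduced complement $Z$ of $\Uc$ is finite over $S$; by $(P)$, $\O_{\mathbb{P}^1}(n)|_Z$ is trivial for some $n$; a nowhere vanishing section lifts via Serre vanishing to $h\in H^0(\mathbb{P}^1_S,\O(n))$, and after a generic perturbation by elements of $H^0(\Ic_Z(n))$ the scheme $\Yc:=\{h=0\}$ is a relative effective Cartier divisor, finite locally free of rank $n$ and surjective over $S$, contained in $\Uc$. For $(S)\Rightarrow(P)$, I first bootstrap $(S)$ to every finite $R$-algebra $R'$: given $\Uc'\subset\mathbb{P}^1_{R'}$ surjecting on $\mathrm{Spec}(R')$, push forward its closed complement along the finite map $\mathbb{P}^1_{R'}\to\mathbb{P}^1_S$ to get a finite closed subset $Z\subset\mathbb{P}^1_S$, and base change a cover of $\mathbb{P}^1_S\setminus Z$ provided by $(S)$ for $R$. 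Given $M\in\mathrm{Pic}(R')$, I then apply $(S)$ for $R'$ to $\mathbb{P}^1_{R'}\setminus D$, where $D$ is a relative effective Cartier divisor of class $(M^{-1},k)$ in $\mathrm{Pic}(\mathbb{P}^1_{R'})=\mathrm{Pic}(R')\oplus\mathbb{Z}$ (existing once $M^{-1}$ is globally generated by $k+1$ sections); the finite locally free $A/R'$ produced yields $M\otimes_{R'}A\cong L^k$ in $\mathrm{Pic}(A)$ with $L$ the pullback of $\O(1)$. Applying the norm map $N_{A/R'}:\mathrm{Pic}(A)\to\mathrm{Pic}(R')$ yields $M^{\mathrm{rank}(A)}\cong N_{A/R'}(L)^k$ in $\mathrm{Pic}(R')$, and further maneuvering (varying $D$ or iterating on the cover) is needed to extract genuine torsion of $M$.

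The main obstacle is precisely this last step—the passage from the norm relation $M^{\mathrm{rank}(A)}\cong N_{A/R'}(L)^k$ to actual torsion of $M$: controlling the ``ambiguity'' $N_{A/R'}(L)$ is the most delicate aspect of the proof and presumably requires a careful choice of divisor $D$ or an iterative application of $(S)$ to analyze $L$ itself. The $(P)\Rightarrow(F)$ direction is also substantive but follows a clean inductive pattern, with the Pic-torsion hypothesis entering only at the final dimension-reduction step.
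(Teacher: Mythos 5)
There are two genuine gaps, both at the places where the real difficulty of the theorem sits. First, in your (P)$\Rightarrow$(F) induction, the ``intermediate'' dimension-reduction steps are not a genericity/prime-avoidance matter. To cut the fiber dimension of $Z_k$ you need a section of $\mathscr{L}^{\otimes N}|_{Z_k}$ that does not vanish identically on any top-dimensional component of the fiber of $Z_k$ over \emph{every} point of $\mathrm{Spec}(R)$; this is one condition per point of the base, typically infinitely many, each over a possibly finite residue field, and no finite prime-avoidance (nor a ``generic'' choice, since finite residue fields admit no generic points of affine space) produces a single global section satisfying all of them simultaneously. Note also that (P) itself cannot be applied at these steps, since $Z_k$ is not finite over $S$ when $k\le d-1$. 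This is exactly where the paper needs the Skolem property: its Proposition \ref{prop:allright} parametrizes hyperplane sections by an open $S'\subset\mathbb{A}^N_S$ surjective over $S$ (via the universal hyperplane and Chevalley semicontinuity), uses (S) to find a finite locally free surjective $T\to S$ inside $S'$, and then takes the \emph{norm} along $\Yc\times_S T\to\Yc$ to get a section of $\mathscr{L}^{\otimes m}$ on $\Yc$ with the desired property. Since you do prove (P)$\Rightarrow$(S) separately, your induction can be repaired by invoking (S) through this universal-hyperplane-plus-norm device at every step --- but that is precisely the paper's route (P)$\Rightarrow$(S)$\Rightarrow$(F), and as written your inductive step does not go through.

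Second, your (S)$\Rightarrow$(P) argument is, as you acknowledge, incomplete: from a multisection of $\mathbb{P}^1_{R'}\smallsetminus D$ you only get $M^{\mathrm{rank}(A)}\cong N_{A/R'}(L)^{k}$, and the class $N_{A/R'}(L)$ is not controlled. The paper avoids this ambiguity by closing the cycle with (F)$\Rightarrow$(P) instead: it takes $\Xc=\mathbb{P}(\mathscr{O}_{S'}\oplus\mathscr{M})$ and removes \emph{both} disjoint sections at once, i.e.\ applies (F) with $d=i=1$ to the divisor $D_\infty+D_0$; the resulting finite map to $\mathbb{P}^1_{S'}$ is finite locally free by the fibral flatness criterion (Proposition \ref{prop:flatprop}), and the preimage of the zero section is a finite locally free surjective $T\to S'$ landing in the punctured line bundle $\mathbb{V}(\mathscr{M})\smallsetminus D_0$, where $\mathscr{M}$ is \emph{tautologically} trivialized; the norm then gives torsion with no leftover factor. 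If you want a direct (S)$\Rightarrow$(P), the missing idea is the same: remove two divisors (the zero and infinity sections of the compactified line bundle), not one divisor in $\mathbb{P}^1_{R'}$, so that the pullback of $M$ to the cover is trivial on the nose. Your (F)$\Rightarrow$(P) via $\mathrm{Pic}(\mathbb{P}(\mathscr{O}\oplus M))=\mathrm{Pic}(R')\oplus\mathbb{Z}$ and restriction to the two sections is a workable alternative to the paper's flatness argument, but it needs that decomposition over an arbitrary (possibly non-Noetherian) ring and some care because (F) only prescribes the support, not the divisor, of $\{\pi^*y_1=0\}$.
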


Property (P) holds, for example, if $R$ is semi-local or the  localization of an order inside
a ring of integers of a global field.  Property (P) is stable under taking finite algebras and quotients.
It is stable  under filtering direct limits of rings because  projective $R$-modules
of rank one are of finite presentation (as direct summands of free finitely generated modules) - see   \cite[Prop. 1.3]{Vas}.  A zero dimensional ring satisfies (P).  A finitely generated
algebra $R$ over a field $k$ satisfies (P) if and only if $k$ is algebraic over a finite field
and $\mathrm{dim}(R) \le 1$. 

Suppose $R$ has the coordinate hyperplane property.  The proof of Theorem \ref{thm:main} shows
that with the notations of Definition \ref{def:coordhyp}, one can find a $\pi:\Xc \to \mathbb{P}^d_S$ such
that $\pi^* O_{\mathbb{P}^d_S}(1)$ is a power of $\mathcal{L}$ and the subscheme of $\Xc$ defined
by $\pi^* y_j$ is an effective Cartier divisor which is a positive integral multiple of $\Xc_j$ for $1 \le j \le i$. (See \S \ref{s:SgivesF}).  If
$i = 0$  in Theorem \ref{def:coordhyp} one obtains that there is a finite 
$S$-morphism
$\pi:\Xc \to \mathbb{P}^d_S$.  When $R$ is the ring of integers of a number field
and $d = 1$, the existence of such a $\pi$ was shown by Green in \cite{Green} and
\cite{Green2}.  Green's result is used in \cite{CPT} to reduce the proof of
certain adelic Riemann-Roch Theorems on surfaces to the case of $\mathbb{P}^1_S$.  We have learned that for
arbitrary $d$, the existence of
a $\pi$ as above when $R$ is a Dedekind ring satisfying Property P has
been shown by different methods by O. Gabber, Q. Liu and D. Lorenzini
in \cite{Gabber}.  

In \S \ref{s:equivalences} we will give some equivalent formulations of properties (P) and (S).
The proof of Theorem \ref{thm:main} is given in \S \ref{s:proof}.  We do not know if the conclusion
of property (F)  when $i = 0$ is sufficient to imply property (F) holds, i.e. whether
this implies the same conclusion for arbitrary $i$. All of properties (P), (S) and (F) make sense over 
a scheme $S$ which may not be affine.  It would be interesting to consider the relationship
of these properties for such a scheme $S$.

We now discuss an application of Theorem \ref{thm:main} to capacity theory.

Suppose $\X$ is a projective flat connected normal
scheme of dimension $d\ge 1$ over a global field $F$.   Let $\X_1$ be an effective ample Cartier divisor on $\X$. Let $M(F)$ be the set of places of $F$.  Let $\mathbb{C}_v$ be the completion of an algebraic closure $\overline{F}_v$ of the completion $F_v$ of $F$ at $v \in M(F)$.
By an adelic set $\mathbb{E} = \prod_{v \in M(F)} E_v$ we will mean a product of subsets of $E_v$ of $(\X \smallsetminus  \X_1)(\mathbb{C}_v)$
which satisfy the following standard hypotheses relative to $\X_1$:  

\label{eq:conditions}
\begin{enumerate}
\item[i.] Each $E_v$ is nonempty and stable under the group $\mathrm{Gal}^c(\mathbb{C}_v/K_v)$
of continuous automorphisms of $\mathbb{C}_v$ over $K_v$.
\item[ii.] Each $E_v$ is bounded away from $\mathrm{supp}(\X_1)(\mathbb{C}_v)$ under the
$v$-adic metric induced by the given projective embedding of $\X$, and for all but finitely 
many $v$, $E_v$ and $\mathrm{supp}(\X_1)(\mathbb{C}_v)$ reduce to disjoint sets (mod $v$).
\end{enumerate}

Let $S_\gamma(\mathbb{E},\X_1)$ be the sectional capacity of $\mathbb{E}$ relative
to $\X_1$ (c.f. \cite{C} and \cite{RLV}).    Let $\overline {F}$ be an algebraic closure of $F$.  The main arithmetic interest of $S_\gamma(\mathbb{E},\X_1)$ is that if this number is less than $1$, then there is a open adelic neighborhood $\mathbb{U} = \prod_{v \in M(F)} U_v$ of $\mathbb{E}$ such that the set of points $\X(\overline{F})$ which have all of their Galois conjugates in $\mathbb{U}$
is not Zariski dense. 

In  \S \ref{s:finitemor} we  define a real number $\gamma_{\mathrm{F}}(\mathbb{E},\X_1)$ which will be called the finite morphism capacity of $\mathbb{E}$ relative to $\X_1$. 
This number gives a higher dimensional
converse implication, paralleling the classical  Fekete-Szeg\H{o} Theorem and the work of Cantor and Rumely
discussed below when $d= 1$:

\begin{theorem}
\label{thm:converseFS}  If $\gamma_{\mathrm{F}}(\mathbb{E},\X_1)  > 1$ then every open adelic neighborhood $\mathbb{U}$
of $\mathbb{E}$ contains a Zariski dense set of points of $\X(\overline{F})$ which have all of their Galois
conjugates in $\mathbb{U}$.
\end{theorem}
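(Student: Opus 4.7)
The plan is to use Theorem \ref{thm:main} together with the one-dimensional Fekete--Szeg\H{o}--Rumely theorem to reduce the assertion to a well-understood situation on $\mathbb{P}^d_F$. By construction, the quantity $\gamma_{\mathrm{F}}(\mathbb{E},\X_1)$ is defined as a supremum, over finite $F$-morphisms $\pi:\X \to \mathbb{P}^d_F$ whose pullback $\pi^*\{y_1=0\}$ is a positive integral multiple $N\cdot \X_1$, of an appropriately normalized (in $N$) classical capacity of the image adelic set $\pi(\mathbb{E})$ in $\mathbb{P}^d$ relative to the coordinate hyperplane $H=\{y_1=0\}$. The existence of such morphisms $\pi$ is precisely what Theorem \ref{thm:main} guarantees (via property (F) applied to the base ring, which is $F$ or the ring of $S$-integers of $F$ and therefore satisfies property (P), hence (F)). Thus $\gamma_{\mathrm{F}}(\mathbb{E},\X_1)>1$ supplies a specific $\pi$ and integer $N\ge 1$ such that the adelic set $\pi(\mathbb{E})=\prod_v\pi(E_v)\subset(\mathbb{P}^d\setminus H)(\mathbb{C}_v)$ has classical capacity strictly greater than $1$.

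First I would transport the neighborhood condition from $\X$ to $\mathbb{P}^d$. Because $\pi$ is finite and hence proper at each place $v$, the pushforward of the closed set $\X(\mathbb{C}_v)\setminus U_v$ is closed, so its complement $V_v$ in $\mathbb{P}^d(\mathbb{C}_v)$ is open, contains $\pi(E_v)$, and satisfies $\pi^{-1}(V_v)\subset U_v$. At almost all $v$, the hypotheses (i),(ii) imply we may take $V_v=\mathbb{P}^d(\mathbb{C}_v)\setminus H(\mathbb{C}_v)$ reduced to the smooth locus modulo $v$, so $\mathbb{V}=\prod_v V_v$ is an honest adelic neighborhood of $\pi(\mathbb{E})$ with $\pi^{-1}(\mathbb{V})\subset\mathbb{U}$, and the standard hypotheses relative to $H$ are preserved.

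Next, I would apply the converse part of Fekete--Szeg\H{o} on $\mathbb{P}^d$ with respect to the hyperplane $H$ to the adelic set $\pi(\mathbb{E})$, which has classical capacity $>1$. For $d=1$ this is exactly Rumely's theorem; for $d>1$ the same conclusion is standard and can be obtained by restricting to a Zariski dense family of lines $L\subset \mathbb{P}^d$ for which $\pi(\mathbb{E})\cap L$ still has one-dimensional capacity $>1$ (a Fubini-type semicontinuity together with the product structure of $\mathbb{E}$), applying the $d=1$ theorem on each $L$ to produce algebraic points of $L$ with all Galois conjugates in $\mathbb{V}\cap L(\mathbb{C}_v)$, and then varying $L$. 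This yields a Zariski dense set $T\subset\mathbb{P}^d(\overline{F})$ whose points all have their Galois conjugates in $\mathbb{V}$.

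Finally I would lift to $\X$. Since $\pi$ is a finite surjection, $\pi^{-1}(T)\subset\X(\overline{F})$ is Zariski dense: any proper closed subvariety $Z\subsetneq\X$ has $\dim\pi(Z)\le\dim Z<d$, so $\pi(Z)\subsetneq\mathbb{P}^d$, and $T\not\subset\pi(Z)$. Galois equivariance of $\pi$ together with $\pi^{-1}(\mathbb{V})\subset\mathbb{U}$ ensures every conjugate of a point in $\pi^{-1}(T)$ lies in $\mathbb{U}$, completing the argument. The main obstacle is the third step: one must justify that capacity $>1$ of $\pi(\mathbb{E})$ on $\mathbb{P}^d$ indeed produces a \emph{Zariski dense} set of algebraic points in a prescribed adelic neighborhood, which requires either a genuinely higher-dimensional converse theorem or a careful slicing-plus-varying-lines argument to propagate the one-dimensional Rumely statement to density in $\mathbb{P}^d$.
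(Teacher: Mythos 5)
There is a genuine gap, and it stems from a misreading of Definition \ref{def:finim}. The finite morphism capacity is \emph{not} a normalized classical capacity of the image set $\pi(\mathbb{E})$: it is the supremum of $|r|^{d\cdot\deg(\pi)/m(\pi)^{d+1}}$ over pairs $(\pi,r)$ with $\pi\in T(\X_1)$ and $\pi^{-1}(B(r))\subset\mathbb{U}$. Consequently, the hypothesis $\gamma_{\mathrm{F}}(\mathbb{E},\X_1)>1$ hands you, after reducing to $\mathbb{E}=\mathbb{U}$ open via (\ref{eq:gin}), a \emph{specific} $\pi$ and an adelic polydisk $B(r)$ with $|r|>1$ and $\pi^{-1}(B(r))\subset\mathbb{U}$; no appeal to Theorem \ref{thm:main} is needed (that theorem enters only later, for Theorem \ref{thm:urk}), and nothing about $\pi(\mathbb{E})$ is relevant. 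By replacing this datum with ``$\pi(\mathbb{E})$ has classical capacity $>1$'' you force yourself into proving a converse Fekete--Szeg\H{o} theorem on $\mathbb{P}^d$ for a general adelic set of capacity $>1$, which you correctly flag as the main obstacle: this is not a standard available result for $d>1$ (its absence is essentially why the paper introduces $\gamma_{\mathrm{F}}$ at all, cf.\ Question \ref{eq:qup}), and your slicing-by-lines sketch leaves unaddressed how to choose $F$-rational lines $L$ so that the sliced set still has capacity $>1$, satisfies the standard hypotheses on $L$, and yields density as $L$ varies. In addition, your neighborhood transport step fails as stated: setting $V_v=\mathbb{P}^d(\mathbb{C}_v)\smallsetminus\pi\bigl(\X(\mathbb{C}_v)\smallsetminus U_v\bigr)$ does give $\pi^{-1}(V_v)\subset U_v$, but $V_v$ need not contain $\pi(E_v)$, since $\pi$ may identify a point of $E_v$ with a point outside $U_v$.

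The paper's route avoids all of this. Given $(\pi,r)$ with $|r|>1$ and $\pi^{-1}(B(r))\subset\mathbb{U}$, one only needs the polydisk case of the converse statement on $\mathbb{P}^d$, which is elementary (Lemma \ref{lem:projZardense}): after multiplying $r$ by $v\mapsto|\alpha|_v^{1/n}$ one may assume $r(v)\ge 1$ everywhere with strict inequality at the archimedean places, and then the points whose affine coordinates are roots of unity form a Zariski dense subset of $\mathbb{P}^d(\overline{F})$ with all conjugates in $B(r)$. Pulling back through the finite, $F$-rational (hence Galois-equivariant) surjection $\pi$ gives a Zariski dense set of points of $\X(\overline{F})$ with all conjugates in $\pi^{-1}(B(r))\subset\mathbb{U}$; this last lifting step is the one part of your argument that does match the paper. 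As written, however, your proposal does not constitute a proof, because its central step (density for general adelic sets of capacity $>1$ in $\mathbb{P}^d$, $d>1$) is unproved.
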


The definition of $\gamma_{\mathrm{F}}(\mathbb{E},\X_1)$ involves considering
the set $T(\X_1)$ of all finite morphims $\pi:\X \to \mathbb{P}^d_{F}$ such that the pull back via $\pi$ of a hyperplane $H$ at infinity
is a positive integral multiple of $\X_1$. Let $ \mathcal{U}(\mathbb{E})$ be the set of open adelic neighborhoods $\mathbb{U}$ of $\mathbb{E}$ which satisfy the standard hypotheses.  Roughly speaking,  $\gamma_{\mathrm{F}}(\mathbb{E},\X_1)$ is an infimum over $\mathbb{U} \in \mathcal{U}(\mathbb{E})$ of  the supremum of the normalized size of the  adelic polydiscs $B$  in affine $d$-space $A^d_{F} = \mathbb{P}^d_{F} \smallsetminus  H$
for which $\pi^{-1}(B)$ is contained in $\mathbb{U}$ for some $\pi \in T(\X_1)$.  

Our first result about $\gamma_{\mathrm{F}}(\mathbb{E},\X_1)$ 
compares it to the 
outer sectional capacity $S^+_\gamma(\mathbb{E},\X_1)  =  \inf \{ S_\gamma(\mathbb{U},\X_1): {\mathbb{U} \in \mathcal{U}(\mathbb{E})}\}$.

\begin{theorem}
\label{thm:compare} One has $$\gamma_{\mathrm{F}}(\mathbb{E},\X_1) \le S_\gamma^+(\mathbb{E},\X_1)$$  
with equality if $\mathbb{E} = \pi^{-1}(B)$ for some
$\pi \in T(\X_1)$ and some adelic polydisk $B$ in $\mathbb{P}^d$.  
\end{theorem}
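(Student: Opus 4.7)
The plan is to derive both parts of the theorem from two classical features of the sectional capacity: monotonicity in the set argument, and functoriality under finite morphisms of the ambient scheme. For the inequality $\gamma_{\mathrm{F}}(\mathbb{E},\X_1) \le S_\gamma^+(\mathbb{E},\X_1)$, I would fix an arbitrary $\mathbb{U} \in \mathcal{U}(\mathbb{E})$. By the definition of $\gamma_{\mathrm{F}}$ as an infimum over such $\mathbb{U}$, it suffices to show that for every admissible pair $(\pi, B)$ with $\pi \in T(\X_1)$ and $\pi^{-1}(B) \subseteq \mathbb{U}$, the normalized size of $B$ is bounded above by $S_\gamma(\mathbb{U}, \X_1)$. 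Monotonicity of the sectional capacity in its first argument reduces this to the pointwise assertion
\[
\mathrm{size}(B) \;\le\; S_\gamma(\pi^{-1}(B), \X_1).
\]

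To establish the last inequality I would combine two standard inputs. (a) For an adelic polydisk $B \subset \mathbb{A}^d_F = \mathbb{P}^d_F \smallsetminus H$, a direct computation with monomial sections of $\mathscr{O}_{\mathbb{P}^d_F}(nH)$ shows that $S_\gamma(B, H)$ is exactly the normalized size of $B$; this is essentially the definition chosen in \S\ref{s:finitemor}. (b) For the finite surjective morphism $\pi: \X \to \mathbb{P}^d_F$ with $\pi^* H$ linearly equivalent to $m\X_1$ for some integer $m \ge 1$, the pullback $\pi^*$ takes $H^0(\mathbb{P}^d_F, nH)$ injectively into $H^0(\X, nm\X_1)$ and is an isometry from the sup norm on $B$ to the sup norm on $\pi^{-1}(B)$; passing to the limit in $n$ in the definition of sectional capacity yields a comparison between $S_\gamma(\pi^{-1}(B), m\X_1)$ and $S_\gamma(B, H)$ whose exponents are governed by $\deg(\pi)$ and the top self-intersection numbers. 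The rescaling from $m\X_1$ to $\X_1$ then produces matching exponents on both sides. The main obstacle I expect is the bookkeeping of normalization conventions: the ``normalized size'' of an adelic polydisk implicitly fixes a power that must be matched against $\deg(\pi)$, the integer $m$, and the $n^{d+1}$--type asymptotic that defines $S_\gamma$; however, this is a purely formal matter once the conventions in \S\ref{s:finitemor} are unwound.

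For the equality statement, assume $\mathbb{E} = \pi_0^{-1}(B_0)$ with $\pi_0 \in T(\X_1)$ and $B_0$ an adelic polydisk. For any $\mathbb{U} \in \mathcal{U}(\mathbb{E})$ the pair $(\pi_0, B_0)$ is itself admissible, since $\pi_0^{-1}(B_0) = \mathbb{E} \subseteq \mathbb{U}$; hence the inner supremum in the definition of $\gamma_{\mathrm{F}}$ is at least $\mathrm{size}(B_0)$, and taking the infimum over $\mathbb{U}$ yields $\gamma_{\mathrm{F}}(\mathbb{E}, \X_1) \ge \mathrm{size}(B_0)$. To control $S_\gamma^+$ from above I would approximate $\mathbb{E}$ from outside by open neighborhoods $\mathbb{U}_\epsilon := \pi_0^{-1}(B_\epsilon)$ obtained from adelic polydisks $B_\epsilon \supsetneq B_0$ with radii slightly enlarged; inputs (a) and (b) together with the evident continuity of the normalized size under such enlargements give $S_\gamma(\mathbb{U}_\epsilon, \X_1) \to \mathrm{size}(B_0)$, whence $S_\gamma^+(\mathbb{E}, \X_1) \le \mathrm{size}(B_0)$. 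Combined with the first part of the theorem applied to $\mathbb{E}$, this forces $\gamma_{\mathrm{F}}(\mathbb{E},\X_1) = S_\gamma^+(\mathbb{E},\X_1) = \mathrm{size}(B_0)$.
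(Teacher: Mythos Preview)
Your argument is correct and matches the paper's: both rest on monotonicity of $S_\gamma$ together with the identity $S_\gamma(\pi^{-1}(B(r)),\X_1)=|r|^{d\,\deg(\pi)/m(\pi)^{d+1}}$, which the paper isolates as Lemma~\ref{lem:projectivespace} (proved via Theorem~C of \cite{RLV} for functoriality under finite maps and the polydisk calculation of \cite{LR}), and which you describe as inputs~(a) and~(b).

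Two minor remarks. First, your sketch of~(b) through ``$\pi^*$ is an injective isometry on global sections'' yields a priori only one inequality between the capacities; the exact pull-back formula $S_\gamma(\pi^{-1}(B),\pi^*H)=S_\gamma(B,H)^{\deg\pi}$ (and the homogeneity $S_\gamma(\mathbb{E},m\X_1)=S_\gamma(\mathbb{E},\X_1)^{m^{d+1}}$) is a genuine theorem of \cite{RLV} about the asymptotic module structure of $H^0(\X,nm\X_1)$ over $H^0(\mathbb{P}^d,nH)$, and you need the \emph{equality} direction in the second half of the proof---so cite it rather than re-derive it. Second, your approximation by enlarged polydisks $\pi_0^{-1}(B_\epsilon)$ to pass from $S_\gamma$ to $S_\gamma^+$ is a legitimate and clean addition; the paper's own proof literally concludes with $\gamma_{\mathrm{F}}(\mathbb{E},\X_1)=S_\gamma(\mathbb{E},\X_1)$ and leaves that easy limiting step to the reader.
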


It will not in general be the case that $\gamma_{\mathrm{F}}(\mathbb{E},\X_1) = S^+_\gamma(\mathbb{E},\X_1)$.
For example, there may be effective Cartier divisors $\X_1'$  with the same support as $\X_1$ such that
 $$S_\gamma^+(\mathbb{E},\X'_1) < 1 < S_\gamma^+(\mathbb{E},\X_1).$$  The first inequality
implies the set of points  $\X(\overline{F})$ which have all their Galois conjugates in some
open adelic neighborhood $\mathbb{U}$ of $\mathbb{E}$ is not Zariski dense.   
So Theorem \ref{thm:converseFS} implies $\gamma_{\mathrm{F}}(\mathbb{E},\X_1) \le 1 <  S_\gamma^+(\mathbb{E},\X_1)$.

This suggests that in comparing sectional capacity to finite morphism capacity, one should consider all effective $\X'_1$ with the same support as $\X_1$.   
One would like to minimize sectional capacity and maximize
finite morphism capacity.  One must also reckon with the fact that if $m > 0 $ an integer, replacing $\X'_1$ by $m \X'_1$ raises
each of $S^+_\gamma(\mathbb{E},\X'_1)$ and $\gamma_{\mathrm{F}}(\mathbb{E},\X'_1)$ to the $m^{d+1}$-th power. This leads to the following definition:

\begin{definition}
\label{def:adjust}  Suppose $\X$ is smooth over $F$.  Let $D(\X_1)$ be the countable set of all ample effective Cartier divisors $\X'_1$ 
on $\X$ whose support $\mathrm{supp}(\X'_1)$ equals $\mathrm{supp}(\X_1)$. Let $|X'_1| > 0$ be the
$d$-fold self intersection number of $\X'_1$.  \begin{enumerate}
\item[i.] Let $S^+_\gamma(\mathbb{E},\mathrm{supp}(\X_1))$ be  
$$ \inf \{ S_\gamma(\mathbb{U},\X'_1)^{|X'_1|^{-(d+1)/d}}: {\mathbb{U} \in \mathcal{U}(\mathbb{E}),\ \  \X'_1 \in D(\X_1)} \}$$
\item[ii.]  Let $\gamma_{\mathrm{F}}(\mathbb{E},\mathrm{supp}(\X_1))$ be  
$$ \inf_{\mathbb{U} \in \mathcal{U}(\mathbb{E})} \left \{ \sup_{\X'_1 \in D(\X_1)} \gamma_{\mathrm{F}}(\mathbb{U},\X'_1)^{|X'_1|^{-(d+1)/d}}\right \}.$$
\end{enumerate}
\end{definition}


\begin{question}
\label{eq:qup} 
If $S^+_\gamma(\mathbb{E},\mathrm{supp}(\X_1)) > 1$ is  $\gamma_{\mathrm{F}}(\mathbb{E},\mathrm{supp}(\X_1)) > 1$?
\end{question}

Note that if Question \ref{eq:qup} has an affirmative answer for a given $\X$, this and Theorem \ref{thm:compare} gives a complete 
Fekete-Szeg\H{o} Theorem for $\X$ and the capacity $S^+_\gamma(\mathbb{E},\mathrm{supp}(\X_1))$.  Namely, one would know that if $S^+_\gamma(\mathbb{E},\mathrm{supp}(\X_1)) < 1$, then there is a open adelic neighborhood $\mathbb{U}$ of $\mathbb{E}$ such that the set of points $\X(\overline{F})$ which have all of their Galois conjugates in $\mathbb{U}$
is not Zariski dense, while if $S^+_\gamma(\mathbb{E},\mathrm{supp}(\X_1)) > 1$ then every such $\mathbb{U}$ contains a Zariski dense set
of such conjugates. 

For $\X$ of dimension $1$ Rumely has compared sectional capacity with his generalization of Cantor's
capacity, and he has proved a complete Fekete-Szeg\H{o} Theorem of the above kind.  We will use his
work to check that when $d = 1$,  Question \ref{eq:qup} has an affirmative answer in a very strong sense:

\begin{theorem}
\label{eq:curve} Suppose $d = 1$ and that $\mathbb{E}$ is $\mathrm{supp}(\X)$-capacitable in the sense of \cite[\S 6.2]{Rumely}.  
If $S^+_\gamma(\mathbb{E},\mathrm{supp}(\X_1)) > 1$ then  $\gamma_{\mathrm{F}}(\mathbb{E},\mathrm{supp}(\X_1))$ and\break  
$S_\gamma^+(\mathbb{E},\mathrm{supp}(\X_1))$ are both equal to  
the capacity $\gamma_{\mathrm{CR}}(\mathbb{E},\mathrm{supp}(\X_1))$ of Cantor and Rumely.
\end{theorem}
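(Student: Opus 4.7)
The strategy is to prove the two equalities separately. First, I would establish $S^+_\gamma(\mathbb{E},\mathrm{supp}(\X_1)) = \gamma_{\mathrm{CR}}(\mathbb{E},\mathrm{supp}(\X_1))$ by invoking Rumely's comparison between sectional capacity and the Cantor--Rumely capacity for curves. For a fixed $\X'_1 \in D(\X_1)$ of degree $m = |\X'_1|$, Rumely's theorem (together with the $\mathrm{supp}(\X_1)$-capacitability hypothesis, which lets one pass between inner and outer capacities) yields a relation of the form $S_\gamma(\mathbb{U},\X'_1) = \gamma_{\mathrm{CR}}(\mathbb{U},\mathrm{supp}(\X_1))^{m^{(d+1)/d}}$ after matching normalizations. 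Raising to the power $m^{-(d+1)/d} = m^{-2}$ makes the right-hand side independent of $\X'_1$, and taking the infimum over $\mathbb{U} \in \mathcal{U}(\mathbb{E})$ produces the desired equality from Definition \ref{def:adjust}(i).

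The upper bound $\gamma_{\mathrm{F}}(\mathbb{E},\mathrm{supp}(\X_1)) \le S^+_\gamma(\mathbb{E},\mathrm{supp}(\X_1))$ is formal from Theorem \ref{thm:compare}: the pointwise inequality $\gamma_{\mathrm{F}}(\mathbb{U},\X'_1) \le S_\gamma(\mathbb{U},\X'_1)$ survives raising to the positive power $|\X'_1|^{-(d+1)/d}$, and one then takes $\sup_{\X'_1}$ inside $\inf_{\mathbb{U}}$ on the left, which is bounded by $\inf_{\mathbb{U}} \inf_{\X'_1}$ on the right, giving the inequality between the two normalized capacities.

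The main obstacle is the reverse inequality $\gamma_{\mathrm{F}}(\mathbb{E},\mathrm{supp}(\X_1)) \ge \gamma_{\mathrm{CR}}(\mathbb{E},\mathrm{supp}(\X_1))$. For this I would invoke the constructive Fekete--Szeg\H{o} part of Rumely's theorem in \cite[Ch.~6]{Rumely}, whose potential-theoretic construction produces, for any $\varepsilon > 0$ and any $\mathbb{U} \in \mathcal{U}(\mathbb{E})$, a rational function $f \in F(\X)^\times$ whose polar divisor $(f)_\infty$ is supported on $\mathrm{supp}(\X_1)$ and an adelic polydisk $B \subset \mathbb{A}^1_F = \mathbb{P}^1_F \smallsetminus H$ of normalized size exceeding $\gamma_{\mathrm{CR}} - \varepsilon$ such that $f^{-1}(B) \subseteq \mathbb{U}$. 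Since $\X$ is connected, normal, projective of dimension one, any nonconstant such $f$ defines a finite morphism $\pi : \X \to \mathbb{P}^1_F$. Writing $(f)_\infty = n \X'_1$ with $\X'_1 \in D(\X_1)$ shows $\pi \in T(\X'_1)$. The normalization exponent $|\X'_1|^{-2}$ cancels the $n$-dependence of the polydisk measurement, so $\sup_{\X'_1} \gamma_{\mathrm{F}}(\mathbb{U},\X'_1)^{|\X'_1|^{-2}} \ge \gamma_{\mathrm{CR}} - \varepsilon$. Letting $\varepsilon \to 0$ and taking the infimum over $\mathbb{U} \in \mathcal{U}(\mathbb{E})$ gives the claim. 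The delicate part of this step is aligning Rumely's choices of Haar measure, his archimedean versus non-archimedean normalizations, and his convention for the "size" of an adelic polydisk with those built into the definition of $\gamma_{\mathrm{F}}$; once this bookkeeping is done, the three capacities are forced to coincide.
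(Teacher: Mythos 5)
Your three-step skeleton (identify $S^+_\gamma$ with $\gamma_{\mathrm{CR}}$, bound $\gamma_{\mathrm{F}}$ above, then below) matches the paper's Lemmas \ref{lem:firststep}, \ref{lem:longhair} and \ref{lem:construct}, but two of your steps do not hold as written. The clearest failure is your ``formal'' upper bound. Definition \ref{def:adjust} normalizes the two sides asymmetrically: $\gamma_{\mathrm{F}}(\mathbb{E},\mathrm{supp}(\X_1))$ is an $\inf_{\mathbb{U}}\sup_{\X'_1}$, while $S^+_\gamma(\mathbb{E},\mathrm{supp}(\X_1))$ is an $\inf_{\mathbb{U}}\inf_{\X'_1}$. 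The pointwise inequality $\gamma_{\mathrm{F}}(\mathbb{U},\X'_1)\le S_\gamma(\mathbb{U},\X'_1)$ from Theorem \ref{thm:compare} therefore only gives $\sup_{\X'_1}\gamma_{\mathrm{F}}(\mathbb{U},\X'_1)^{|\X'_1|^{-2}}\le \sup_{\X'_1}S_\gamma(\mathbb{U},\X'_1)^{|\X'_1|^{-2}}$, and the right-hand supremum is in general strictly larger than the infimum defining $S^+_\gamma$ (by Rumely's formula the normalized sectional capacity of a fixed $\X'_1$ is $\exp(-s^t\Gamma s)$ for the normalized multiplicity vector $s$, which exceeds $\gamma_{\mathrm{CR}}(\mathbb{U},\Tc)$ except at the extremal distribution). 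One cannot ``take $\sup$ inside $\inf$'' here. The paper closes this gap by a different device (Lemma \ref{lem:longhair}): if $f^{-1}(\infty)=m(f)\X'_1$ then $(|r|^{\deg(f)/m(f)^2})^{|\X'_1|^{-2}}=|r|^{1/\deg(f)}=\gamma_{\mathrm{CR}}(f^{-1}(B(r)),\Tc)$ by the pullback formula for the Cantor--Rumely capacity, so monotonicity bounds the supremum over all $\X'_1$ uniformly by $\gamma_{\mathrm{CR}}(\mathbb{U},\Tc)$, and then $\inf_{\mathbb{U}}\gamma_{\mathrm{CR}}(\mathbb{U},\Tc)=\gamma_{\mathrm{CR}}(\mathbb{E},\Tc)$ by the approximation theorems. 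You use exactly this exponent cancellation in your lower bound; it is also indispensable for the upper bound.

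Your first step likewise rests on an identity that is false for a fixed divisor: Rumely's comparison (\cite{R2}, equation (\ref{eq:Rumform}) in the paper) is $-\ln S_\gamma(\mathbb{E},\X'_1)=s^t\Gamma(\mathbb{E},\Tc)s$, which depends on how the degree is distributed among the points of $\Tc$, not merely on $|\X'_1|$; so $S_\gamma(\mathbb{U},\X'_1)\ne\gamma_{\mathrm{CR}}(\mathbb{U},\Tc)^{|\X'_1|^{2}}$ in general. To prove $S^+_\gamma(\mathbb{E},\Tc)=\gamma_{\mathrm{CR}}(\mathbb{E},\Tc)$ one must maximize $s\mapsto s^t\Gamma s$ over the probability simplex and identify that maximum with $\mathrm{Val}(\Gamma)$, which requires showing $\Gamma(\mathbb{E},\Tc)$ is negative definite --- and this is precisely where the hypothesis $S^+_\gamma(\mathbb{E},\Tc)>1$ enters (Lemma \ref{lem:firststep}); your sketch never uses that hypothesis, and without it the statement fails (Example \ref{ex:counter}). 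Finally, your lower bound cites Rumely's Fekete--Szeg\H{o} theorem as directly furnishing polydisks of normalized size within $\varepsilon$ of $\gamma_{\mathrm{CR}}$; as published, \cite[Thm.\ 6.2.2]{Rumely} only produces the unit polydisk ($|r|=1$), and the quantitative sharpening must be extracted by reworking his proof (tracking the constants $R_w$ with $\ln(R)/n\to-\mathrm{Val}(\Gamma)$ and letting the auxiliary degree tend to infinity), which is the content of the paper's Lemma \ref{lem:construct}. So the architecture is right, but the upper bound needs the Cantor--Rumely pullback argument, the identification $S^+_\gamma=\gamma_{\mathrm{CR}}$ needs the game-value/negative-definiteness argument driven by the hypothesis, and the lower bound needs a genuine strengthening of the theorem you cite rather than the theorem itself.
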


It would be interesting to know if such an equality holds for $\X$ of higher dimension when
$S^+_\gamma(\mathbb{E},\mathrm{supp}(\X_1)) > 1$.  A natural case to consider is when $\X$ itself is
isomorphic to $\mathbb{P}^d$. Then each map $\phi:\X = \mathbb{P}^d \to \mathbb{P}^d$
in the definition of $\gamma_{\mathrm{F}}(\mathbb{E},\X_1)$ together with the line bundle
$O_{\X}(\X_1)$ gives rise to a polarized dynamical system in the sense of \cite[\S 2]{P}.
In a later paper we will return to the problem of how to compute the sectional and finite
morphism capacities of filled Julia sets associated to dynamical systems on $\mathbb{P}^d$.

Simple examples show that one cannot expect the conclusion of Theorem \ref{eq:curve} to hold
if $S^+_\gamma(\mathbb{E},\mathrm{supp}(\X_1)) < 1$ (see Example \ref{ex:counter}). 
These examples suggest that in this case, there may simply not be enough morphisms from $\X$
to $\mathbb{P}^d$ to explain the numerical size of $S^+_\gamma(\mathbb{E},\mathrm{supp}(\X_1))$.

We conclude with an application of Theorem \ref{thm:main} to finite morphism capacity.
If $F$ is a number field, let  $M_{\mathrm{f}}(F)$ and $M_\infty(F)$ be the sets of finite and infinite places of $F$.  If $F$ is a global function field, we  let $M_\infty(F)$
be a nonempty finite set of places of $F$, and we let $M_{\mathrm{f}}(F) = M(F) \smallsetminus  M_\infty(F)$. For $v \in M_{\mathrm{f}}(F)$ let 
$O_{\mathbb{C}_v} = \{z \in \mathbb{C}_v:  |z|_v \le 1\}$. 

 We will say that a product  
$\mathbb{U}_{\mathrm{f}} = \prod_{v \in M_{\mathrm{f}}(F)}  U_v$ of subsets $U_v \subset \X(\mathbb{C}_v)$  is an RL-domain for the finite adeles if 
there is a projective embedding 
$\X \to \mathbb{P}^t_F$  and a hyperplane $H_0$ in $\mathbb{P}^t_F$ such that $\X_1$ is a multiple of $ \X \cap H_0$ and 
$$U_v = \X(\mathbb{C}_v) \cap (\mathbb{P}^t \smallsetminus  H_0)(O_{\mathbb{C}_v})$$  for all $v \in M_{\mathrm{f}}(F)$.  It is clear that an adelic set $\mathbb{U} = \mathbb{U}_{\mathrm{f}} \times \prod_{v \in M_\infty(F)} U_v$
will satisfy the standard hypotheses provided the $U_v$ associated to $v \in M_\infty(F)$  satisfy conditions (i) and  (ii) above. 

Note that the dimension $d$ of $\X$ may be much smaller than that of the projective space $\mathbb{P}_F^t$ into
which we have embedded $\X$.  Recall that in $\mathbb{P}_F^d$ we have chosen homogeneous
coordinates $y = (y_1:\ldots:y_{d+1})$ as well as affine coordinates $z_i = y_i/y_1$ for $i = 2,\ldots,d+1$
for the affine space $A^d = \mathbb{P}^d \smallsetminus  H$ when $H = \{y: y_1 = 0\}$.  We will show:

\begin{theorem}
\label{thm:urk}
Suppose that $\mathbb{U}_{\mathrm{f}}$ is an RL-domain with respect to the finite adeles. There is a finite morphism $\pi_{\mathbb{U}_{\mathrm{f}}}:\X \to \mathbb{P}_F^d$ with the following properties:
\begin{enumerate}
\item[a.] The pullback $\pi_{\mathbb{U}_{\mathrm{f}}}^{-1}(H)$ is a positive integral multiple of $\X_1$.
\item[b.]  Let  $B_{\mathrm{f}}(1) = \prod_{v \in M_{\mathrm{f}}(F)} B_v(1)$ where 
$B_v(1)$ is the unit polydisk $$\{z = (z_2,\ldots,z_{d+1}): |z_i|_v \le 1\} \subset A^d(\mathbb{C}_v).$$
Then 
$$\pi^{-1}_{\mathbb{U}_{\mathrm{f}}}(B_{\mathrm{f}}(1)) = \mathbb{U}_{\mathrm{f}}.$$
\end{enumerate}
\end{theorem}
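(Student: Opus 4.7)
The plan is to deduce Theorem \ref{thm:urk} from Theorem \ref{thm:main} by spreading the data out over an integral model. Let $R$ denote the ring of $M_\infty(F)$-integers of $F$; this is a Dedekind domain of the type for which the remarks after Theorem \ref{thm:main} guarantee Property (P), so by Theorem \ref{thm:main} itself $R$ satisfies the coordinate hyperplane property (F).

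First, using the given embedding $\X \hookrightarrow \mathbb{P}^t_F$, I would take $\Xc$ to be the scheme-theoretic closure of $\X$ in $\mathbb{P}^t_R$, a flat projective $R$-scheme of relative dimension $d$. Writing $\X_1 = e\cdot(\X \cap H_0)$ for a positive integer $e$, let $\mathcal{L} = \OO_{\Xc}(e)$ and let $h_1 \in \Gamma(\Xc,\mathcal{L})$ be the restriction of $y_0^e$, where $y_0$ is a linear form cutting out $H_0$ on $\mathbb{P}^t_R$. The zero locus $\Xc_1 := \mathrm{div}(h_1)$ is an effective Cartier divisor on $\Xc$ whose generic fiber is $\X_1$ and whose support equals $\mathrm{supp}(\Xc \cap H_0)$. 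After possibly modifying $\Xc$ (for instance by blowing up loci of bad intersection on the special fibers), I may assume $\Xc_1$ has fiber dimension $\le d-1$, so that the hypotheses of Definition \ref{def:coordhyp} with $i = 1$ are satisfied. Applying property (F), together with the strengthening recorded immediately after Theorem \ref{thm:main}, produces a finite $R$-morphism $\pi_R : \Xc \to \mathbb{P}^d_R$ and coordinates $(y_1:\cdots:y_{d+1})$ on $\mathbb{P}^d_R$ such that $\pi_R^*(y_1)$ cuts out a positive integral multiple of $\Xc_1$. Define $\pi_{\mathbb{U}_{\mathrm{f}}} := \pi_R \otimes_R F : \X \to \mathbb{P}^d_F$; restricting to the generic fiber then yields part (a).

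For part (b), fix $v \in M_{\mathrm{f}}(F)$ and $P \in \X(\mathbb{C}_v)$. The valuative criterion of properness applied to $\Xc/R$ produces a unique extension $\tilde P \in \Xc(O_{\mathbb{C}_v})$; by functoriality, $\pi_R(\tilde P) \in \mathbb{P}^d(O_{\mathbb{C}_v})$ is the unique extension of $\pi_{\mathbb{U}_{\mathrm{f}}}(P)$. Therefore $\pi_{\mathbb{U}_{\mathrm{f}}}(P) \in B_v(1) = (\mathbb{P}^d\smallsetminus H)(O_{\mathbb{C}_v})$ if and only if $\pi_R(\tilde P)$ avoids $H$ as a subscheme of $\mathbb{P}^d_{O_{\mathbb{C}_v}}$, which is equivalent to $\tilde P$ avoiding $\pi_R^{-1}(H)$ as subschemes of $\Xc_{O_{\mathbb{C}_v}}$. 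Since $\mathrm{supp}(\pi_R^{-1}(H)) = \mathrm{supp}(\Xc_1) = \mathrm{supp}(\Xc \cap H_0)$, this reduces to $\tilde P \notin \Xc \cap H_0$, i.e.\ to $P \in (\mathbb{P}^t\smallsetminus H_0)(O_{\mathbb{C}_v}) \cap \X(\mathbb{C}_v) = U_v$. Taking the product over $v \in M_{\mathrm{f}}(F)$ gives $\pi_{\mathbb{U}_{\mathrm{f}}}^{-1}(B_{\mathrm{f}}(1)) = \mathbb{U}_{\mathrm{f}}$.

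The main technical difficulty is the fiber-dimension condition required by Definition \ref{def:coordhyp}: at primes of bad reduction, the naive intersection $\Xc \cap H_0$ could accidentally contain an entire irreducible component of some special fiber of $\Xc$. Arranging a suitable model while preserving the identification of supports used in part (b) is the step that needs the most care.
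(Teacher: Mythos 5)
You have the paper's skeleton exactly right: pass to the ring $O_F$ of $M_\infty(F)$-integers, which satisfies (P) and hence (F) by Theorem \ref{thm:main}; apply (F) with $i=1$ on a projective model of $\X$ to a divisor extending $\X_1$, using the strengthening recorded after Theorem \ref{thm:main}; and obtain (b) by observing that an integral point lies in $U_v$ exactly when its reduction misses the reduction of that divisor --- your valuative-criterion argument for (b) is essentially the paper's. The genuine gap is the step you wave through: Definition \ref{def:coordhyp} requires the zero scheme $\Xc_1$ of $h_1$ to have fiber dimension $\le d-1$, and for the scheme-theoretic closure of $\X$ in $\mathbb{P}^t_{O_F}$ nothing prevents an entire irreducible component of some special fiber from lying inside the hyperplane at infinity, so this hypothesis can fail and cannot simply be ``assumed''. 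Your proposed remedy by blow-ups does not repair it: if a fiber component $C$ lies in the zero locus of $y_0^e$, then the pullback of that section to any blow-up still vanishes along the strict transform of $C$ (and along the exceptional divisors), so the fiber dimension of its zero scheme does not drop. To change this you must change the section, the divisor, or the line bundle, and then you must simultaneously keep a sheaf that is ample relative to $\mathrm{Spec}(O_F)$ with $\Xc_1$ the zero scheme of one of its sections, as Definition \ref{def:coordhyp} demands, and re-establish the identification $\mathrm{supp}(\Xc_1)=\mathrm{supp}(\Xc\cap H_0)$ that drives part (b). As written, the proposal is therefore a correct plan with its decisive step missing.

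The paper does not take an arbitrary model and repair it; it manufactures the model so that the fiber-dimension bound and the RL-domain description come out together. After reducing to the case where the embedding is given by a generating set $f_0=1,f_1,\ldots,f_t$ of $H^0(\X,\OO_{\X}(\X_1))$, so that $U_v$ is described by (\ref{eq:urkU}), it sets $\Xc=\mathrm{Proj}(A)$, where $A_1$ is the $O_F$-module generated by $f_0,\ldots,f_t$ inside $H^0(\X,\OO_{\X}(\X_1))\subset F(\X)$ and $A_j=A_1^{j}$, and takes for $\Xc_1$ the zero scheme $D$ of the degree-one element $f_0$. The crux of the paper's argument is the assertion that this $D$ is horizontal, namely the Zariski closure of $\X_1$, which gives the required fiber dimension $\le d-1$ so that (F) applies with $\mathscr{L}=\OO_{\Xc}(1)$ and $h_1=f_0$, while (\ref{eq:urkU}) says precisely that $U_v$ consists of the points whose reduction avoids the reduction of $D$, yielding (b). Note that when $e=1$ your model, the scheme-theoretic closure of $\X$ in $\mathbb{P}^t_{O_F}$, coincides with this $\mathrm{Proj}(A)$; so what is missing from your write-up is not a different model but the argument that on it the divisor at infinity is horizontal --- exactly the point you concede ``needs the most care'', and the point where all the work of the theorem is concentrated.
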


\begin{cor}
\label{cor:app}Given $\mathbb{U}_{\mathrm{f}}$ as in Theorem \ref{thm:urk}, there is an open adelic set 
$\mathbb{U} = \mathbb{U}_{\mathrm{f}} \times \prod_{v \in M_\infty(F)} U_v$ which satisfies the standard hypotheses
such that $\gamma_{\mathrm{F}}(\mathbb{U},\X_1) > 1$.  In consequence
the set of global algebraic points of $\X(\overline{F})$ which have all of their Galois conjugates in $\mathbb{U}_{\mathrm{f}}$
is Zariski dense.  
\end{cor}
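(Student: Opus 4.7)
The plan is to apply Theorem \ref{thm:urk} to produce the finite morphism $\pi = \pi_{\mathbb{U}_{\mathrm{f}}}:\X \to \mathbb{P}^d_F$, to construct the missing archimedean components $U_v$ as preimages under $\pi$ of slightly dilated open polydiscs, to read off $\gamma_{\mathrm{F}}(\mathbb{U},\X_1) > 1$ essentially from the construction, and then to finish by invoking Theorem \ref{thm:converseFS}. Since $\pi^{-1}(H)$ is a positive integral multiple of $\X_1$, the morphism $\pi$ belongs to $T(\X_1)$, so it is allowable in the definition of $\gamma_{\mathrm{F}}$.

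For each $v \in M_\infty(F)$ I would fix a real number $r_v > 1$, let $B_v(r_v) = \{z : |z_i|_v < r_v,\ 2 \le i \le d+1\} \subset A^d(\mathbb{C}_v)$ denote the open polydisk of radius $r_v$, and set $U_v := \pi^{-1}(B_v(r_v))$. Continuity, $F$-rationality, and the absolute-value description of $B_v(r_v)$ together make each $U_v$ open, nonempty, and $\mathrm{Gal}^c(\mathbb{C}_v/F_v)$-stable. Compactness of $\X(\mathbb{C}_v)$ ensures that $B_v(r_v)$ is at positive $v$-adic distance from $H(\mathbb{C}_v)$, so $U_v$ is bounded away from $\pi^{-1}(H)(\mathbb{C}_v) = \mathrm{supp}(\X_1)(\mathbb{C}_v)$; and because being bounded away from a closed subset of a compact space is a topological rather than quantitative condition, this conclusion is insensitive to which continuous metric from a projective embedding of $\X$ is used, so hypothesis (ii) holds as required. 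Hence $\mathbb{U} := \mathbb{U}_{\mathrm{f}} \times \prod_{v \in M_\infty(F)} U_v$ is an open adelic set satisfying the standard hypotheses.

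For the capacity lower bound I would exhibit a single adelic polydisk of normalized size $> 1$ whose $\pi$-preimage is contained in $\mathbb{U}$. Set $B := B_{\mathrm{f}}(1) \times \prod_{v \in M_\infty(F)} B_v(r_v)$; by part (b) of Theorem \ref{thm:urk} together with the definition of the $U_v$ at archimedean places, $\pi^{-1}(B) = \mathbb{U}$. Since $\mathbb{U}$ is open, every $\mathbb{V} \in \mathcal{U}(\mathbb{U})$ contains $\mathbb{U}$, so the pair $(\pi,B)$ is an admissible witness for the inner supremum in the definition of $\gamma_{\mathrm{F}}(\mathbb{U},\X_1)$ at every such $\mathbb{V}$; consequently $\gamma_{\mathrm{F}}(\mathbb{U},\X_1)$ is at least the normalized size of $B$, which is a product of positive powers of the $r_v > 1$ at archimedean places (with trivial factors at finite places) and is therefore strictly greater than $1$. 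Theorem \ref{thm:converseFS} applied to $\mathbb{E} = \mathbb{U}$ then yields a Zariski-dense set of points of $\X(\overline{F})$ whose Galois conjugates all lie in $\mathbb{U}$, and in particular whose finite-adelic conjugates lie in $\mathbb{U}_{\mathrm{f}}$. The only substantial point I expect to need care with is the comparison of metrics in hypothesis (ii), resolved as above by compactness; everything quantitative has already been packaged into Theorems \ref{thm:urk} and \ref{thm:converseFS}.
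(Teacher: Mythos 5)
Your proposal is correct and follows essentially the same route as the paper: the paper likewise takes the archimedean $U_v$ large enough (your explicit choice $U_v=\pi^{-1}(B_v(r_v))$ with $r_v>1$ is one such choice) so that an adelic polydisk of normalized size $>1$ has $\pi$-preimage inside $\mathbb{U}$, giving $\gamma_{\mathrm{F}}(\mathbb{U},\X_1)>1$, and then concludes via Theorem \ref{thm:converseFS}. The only cosmetic point is that Definition \ref{def:finim} uses closed polydisks $B(r)$, so at the infinite places your witness should be $B(r')$ with $1<r'_v<r_v$, which is contained in your open polydisk and still has $|r'|>1$; this changes nothing of substance.
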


We prove Theorem \ref{thm:urk} by applying Theorem \ref{thm:main} to a well chosen integral model of
$\X$ over the natural choice of Dedekind subring $O_F$ of $F$.  

To 
obtain more quantitative information about finite morphism capacities and to study Question \ref{eq:qup}, one needs a version of Theorem \ref{thm:main} which includes conditions at $v \in M_\infty(F)$.  This amounts to a question in function theory, in
the sense that one must construct rational functions on $\X$ giving sections
of a power of $O_{\X}(\X_1)$ which define a morphism to $\mathbb{P}^d$
having the required properties.

\section{Some properties of rings}
\label{s:equivalences}

\begin{proposition}
\label{prop:Picprop}
The following conditions on a commutative ring $R$ are equivalent:
\begin{enumerate}
\item[1.] For every $R$-algebra $R'$ which is integral over $R$, $\mathrm{Pic}(R')$ is
a torsion group.
\item[2.] For every finite $R$-algebra $R'$, $\mathrm{Pic}(R')$ is a torsion group.
\item[3.] For every finite and finitely presented $R$-algebra $R'$, $\mathrm{Pic}(R')$
is a torsion group.
\end{enumerate}
We shall say that $R$ satisfies property {\rm (P)} if these conditions hold.
\end{proposition}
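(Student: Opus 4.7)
My plan is to establish $(3) \Rightarrow (1)$; the implications $(1) \Rightarrow (2)$ and $(2) \Rightarrow (3)$ are immediate, since a finite $R$-algebra is integral and a finite and finitely presented $R$-algebra is in particular finite. The strategy for $(3) \Rightarrow (1)$ is to realise any integral $R$-algebra $R'$ as a filtered colimit of finite and finitely presented $R$-algebras, and then to exploit the fact, noted in the introduction, that rank-one projective modules are always of finite presentation \cite[Prop.~1.3]{Vas}.

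To construct the filtered system I would let $\mathcal{C}$ be the category whose objects are pairs $(A,\phi)$ with $A$ a finite and finitely presented $R$-algebra and $\phi : A \to R'$ an $R$-algebra map, the morphisms being commuting $R$-algebra maps between the $A$'s. I would then check that the natural map $\operatorname{colim}_{\mathcal{C}} A \to R'$ is an isomorphism: surjectivity is immediate because each $x \in R'$ satisfies a monic $f \in R[T]$, giving an object $(R[T]/(f),\, T \mapsto x)$ of $\mathcal{C}$; and $\mathcal{C}$ is filtered because $A_1 \otimes_R A_2$ remains finite and finitely presented, and because any two parallel arrows $(A,\phi) \rightrightarrows (B,\psi)$ in $\mathcal{C}$ can be coequalised by quotienting $B$ by the finitely generated ideal generated by the differences of their images on generators. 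The same coequaliser construction forces injectivity of the canonical map on the colimit.

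Given an invertible $R'$-module $L$, Vasconcelos's result shows $L$ is finitely presented, so by the standard descent of finitely presented modules along filtered colimits of rings (EGA IV$_3$, \S 8) there exist $(A,\phi) \in \mathcal{C}$ and a finitely presented $A$-module $M$ with $M \otimes_A R' \cong L$. Since flatness and local freeness of rank one for a finitely presented module are constructible properties in such a filtered system, I may enlarge $(A,\phi)$ within $\mathcal{C}$ to ensure that $M$ itself is invertible over $A$. Hypothesis $(3)$ then furnishes an integer $n \ge 1$ with $M^{\otimes n} \cong A$, and tensoring with $R'$ yields $L^{\otimes n} \cong R'$, so $\mathrm{Pic}(R')$ is torsion. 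The main delicate step I anticipate is the spreading-out of invertibility from $L$ to some $M$ over a finite finitely presented $A$: recognising ``invertible'' as a constructible condition on finitely presented modules in a filtered inverse system of rings is precisely what EGA IV$_3$, \S 8 is designed for, but writing it out carefully in this setting is the one piece of the argument that requires more than bookkeeping.
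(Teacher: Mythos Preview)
Your proof is correct and follows exactly the paper's strategy: express the integral $R$-algebra $R'$ as a filtered colimit of finite and finitely presented $R$-algebras, then use that $\mathrm{Pic}$ commutes with filtered colimits of rings (your descent-and-spread-out argument is precisely what this commutation unpacks to). Your explicit construction of the indexing category $\mathcal{C}$ is a careful filling-in of the paper's one-line assertion about the filtered-colimit presentation, which the paper states without further justification.
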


\begin{proof}It is obvious that (1) implies (2) and (2) implies (3).  To show that (3) implies (1),
note that every integral $R'$-algebra is a filtering direct limit of finite and finitely
presented $R$-algebras.  The result then follows from the fact that the Picard
functor commutes with filtering direct limits, as noted in the introduction.  
\end{proof}

The second condition on $R$ we will consider is the ``Skolem property" (S)
(see \cite{MB}).

\begin{proposition}
\label{prop:Skolprop}For a commutative ring $R$ with spectrum $S$, the following conditions
are equivalent:
\begin{enumerate}
\item[1.] For each $n \in \mathbb{N}$ and each open subscheme $\Uc \subset \mathbb{P}^n_S$
which is surjective over $S$, there is a subscheme $\Yc$ of $\Uc$ which is finite, free and surjective
over $S$.
\item[2.]  Same as condition \textup{(1)}, with $n = 1$.
\item[3.] For each $n \in \mathbb{N}$ and each open subscheme $\Uc \subset \mathbb{P}^n_S$
which is surjective over $S$, there is 
an $S$-morphism $\Yc\to \Uc$ where $\Yc$ is finite, locally free and surjective
over $S$.
\item[4.]  Same as condition \textup{(3)}, with $n = 1$.
\end{enumerate}
We will say that $R$ has property {\rm (S)} if these conditions hold.
\end{proposition}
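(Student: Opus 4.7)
The four implications $(1) \Rightarrow (2)$, $(1) \Rightarrow (3)$, $(2) \Rightarrow (4)$, and $(3) \Rightarrow (4)$ are immediate: the first two by restricting to $n=1$, the latter two because a finite free closed subscheme of $\Uc$ is in particular a finite locally free $S$-morphism into $\Uc$. So it suffices to close the cycle, for example by proving $(4) \Rightarrow (1)$. I would split this into two independent tasks: first reduce the dimension of the ambient projective space from $n$ to $1$, and second upgrade a finite locally free $S$-morphism $\Yc \to \Uc$ to a finite free closed subscheme of $\Uc$.

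For the dimension reduction, the plan is to induct on $n$. Given $\Uc \subseteq \mathbb{P}^n_S$ open and surjective with $n \geq 2$, the idea is to find a closed embedding $\iota:\mathbb{P}^1_S \hookrightarrow \mathbb{P}^n_S$ (a line $L$) such that $\iota^{-1}(\Uc) \subseteq \mathbb{P}^1_S$ is open and still surjective over $S$. Applying (4) to $\iota^{-1}(\Uc)$ then furnishes a finite locally free surjective cover, and composition with $\iota$ gives one for $\Uc$. Since no fixed line need meet $\Uc$ over every point of $S$, I would manufacture $L$ from a generic choice of linear coordinates on $\mathbb{P}^n_S$; alternatively, one may project from a carefully chosen point of $\mathbb{P}^n_S \smallsetminus \Uc$ to $\mathbb{P}^{n-1}_S$ and verify that the image of $\Uc$ is open and surjective, which reduces to $n-1$.

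For the upgrade step, given a finite locally free surjective $S$-morphism $f:\Yc \to \Uc$, form the scheme-theoretic image $\Yc':=f(\Yc)\subseteq \Uc$. Because $\Yc\to S$ is finite and $\Uc\to S$ is separated, the cancellation principle forces $f$ itself to be finite, so $\Yc'$ is closed in $\Uc$. The composition $\Yc\to \Yc'\to S$ is finite and surjective, and since $\Yc\to \Yc'$ is surjective, $\Yc'\to S$ is surjective and universally closed; combined with quasi-finiteness of its fibers this makes $\Yc'\to S$ finite and surjective.

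The main obstacle I expect is promoting ``$\Yc'$ is finite over $S$'' to ``$\Yc'$ is finite \emph{free} over $S$'': the scheme-theoretic image of a finite locally free morphism need not even be flat over the base. To surmount this one must exploit the flexibility in choosing $f$ from hypothesis (4) — for instance, by taking a well-chosen disjoint union of covers supplied by (4) to trivialize the relevant class in $K_0(S)$, or by cutting $\Yc'$ with auxiliary hyperplane sections of $\mathbb{P}^n_S$ to force the rank to be constant. This is the geometrically substantive point; the remainder of the argument is formal.
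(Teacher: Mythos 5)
Your skeleton (four immediate implications plus closing the cycle) matches the paper's, but both halves of your closing step have real gaps. The dimension reduction by a line cannot work as stated: you need a single closed immersion $\iota:\mathbb{P}^1_S\to\mathbb{P}^n_S$ with $\iota^{-1}(\Uc)$ surjective over $S$, and linear embeddings do not suffice even over a field. For instance, take $S=\mathrm{Spec}(\mathbb{F}_q)$, $n\ge 2$, and let $\Uc$ be the complement of the union of all $\mathbb{F}_q$-rational hyperplanes: every $\mathbb{F}_q$-rational line lies in one of those hyperplanes, so every line over the base misses $\Uc$ entirely. Moreover a ``generic choice of linear coordinates'' has no meaning over an arbitrary commutative ring, and the projection alternative reduces in the wrong direction: a finite locally free cover of the image of $\Uc$ in $\mathbb{P}^{n-1}_S$ does not lift to a morphism into $\Uc$, and the center of projection would have to be an $S$-section of the complement, which need not exist. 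What is needed---and what Lemma \ref{lem:trick} supplies---is a curve of higher degree: after reducing to $\mathbb{A}^n_S$, embed $\mathbb{A}^1_S$ by $t\mapsto(t,t^{m_2},\dots,t^{m_n})$, with exponents chosen so that $(x_1,\dots,x_n)\mapsto x_1+\sum_\ell m_\ell x_\ell$ is injective on the finite set of multi-exponents of the equations cutting out the complement; then the substituted one-variable polynomials have exactly the same coefficients as the originals, so those coefficients still generate the unit ideal and the pullback of $\Uc$ is surjective over $S$.

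For the upgrade from a finite locally free morphism $\Yc\to\Uc$ to a finite free closed subscheme, you correctly isolate the substantive difficulty but leave it unresolved; the $K_0$ and hyperplane-section suggestions are not arguments. The paper's solution is concrete and is carried out only after the reduction to $n=1$ (it proves $(4)\Rightarrow(2)$, not $(4)\Rightarrow(1)$ directly): arrange $\Yc=\mathrm{Spec}(R_1)$ locally free of constant rank $r$ over $R$, note that the composite $\Yc\to\Uc\subset\mathbb{A}^1_S$ is given by an element $z\in R_1$, let $F(t)\in R[t]$ be the characteristic polynomial of $z$, and set $\Yc'=\mathrm{Spec}(R[t]/(F))$, which is finite free of rank $r$ because $F$ is monic. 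By Cayley--Hamilton the map $R[t]\to R_1$ kills $F$, so $\Yc\to\mathbb{A}^1_S$ factors through $\Yc'$, and $\Yc'$ is set-theoretically the image of $\Yc$, hence lies in $\Uc$. Your plan to perform this upgrade inside $\mathbb{A}^n$ in one stroke is precisely what makes it look intractable; doing the curve reduction first (which gives $(2)\Rightarrow(1)$ and $(4)\Rightarrow(3)$) and the characteristic-polynomial trick only at $n=1$ closes the cycle. (A minor slip: $(1)\Rightarrow(3)$ is not ``restriction to $n=1$'' but the weakening from subscheme to morphism; all four initial implications are nevertheless immediate.)
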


To begin the proof of Proposition \ref{prop:Skolprop}, we first note that we can equivalently use $\mathbb{A}^n_S$
instead of $\mathbb{P}^n_S$ in each case. It is trivial that (1) implies both (2) and (3), and that
either (2) or (3) implies (4).  The fact that (2) implies (1), and that (4) implies (3), is shown by the following result:

\begin{sublemma}
\label{lem:trick}  Let $\Uc \subset \mathbb{A}_S^n$ be open and surjective over $S$.  Then there exists
an $n$-tuple of positive integers $m_1 = 1,m_2,\ldots,m_n$ such that if $j$ is the closed
immersion $j:\mathbb{A}^1_S \to \mathbb{A}^n_S$ defined by $j(t) = (t,t^{m_2},\ldots,t^{m_n})$
then the open subset $j^{-1}(\Uc)$ is surjective over $S$.
\end{sublemma}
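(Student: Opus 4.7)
My plan is to reformulate surjectivity of $j^{-1}(\Uc) \to S$ as an ideal condition on the coefficients of polynomials defining principal opens inside $\Uc$, extract a finite such collection using the fact that $1 \in R$, and then choose the exponents $m_i$ so that the substitution $x_i \mapsto t^{m_i}$ cannot destroy a polynomial that was nonzero over some fiber.

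First I would write $\Uc = \bigcup_f D(f) \subseteq \mathbb{A}^n_S = \mathrm{Spec}(R[x_1,\ldots,x_n])$, the union ranging over all $f \in R[x_1,\ldots,x_n]$ with $D(f) \subseteq \Uc$, since principal opens form a basis. Now $D(f)$ meets the fiber of $\mathbb{A}^n_S \to S$ at $s$ precisely when some coefficient of $f$ lies outside $\mathfrak{p}_s$; hence surjectivity of $\Uc \to S$ forces the ideal $\mathfrak{a} \subseteq R$ generated by the coefficients of all such $f$ to be contained in no prime, and therefore $\mathfrak{a} = R$. Writing $1 \in \mathfrak{a}$ as a finite $R$-linear combination and collecting by polynomial picks out finitely many $f_1,\ldots,f_k$ with $D(f_j) \subseteq \Uc$ whose combined coefficients already generate the unit ideal of $R$; in particular $\bigcup_j D(f_j)$ alone surjects onto $S$.

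Next I would pick a single integer $M$ strictly larger than every exponent of every variable occurring in any $f_j$, and set $m_1 = 1$ and $m_i = M^{i-1}$ for $i \geq 2$. The substitution $x_i \mapsto t^{m_i}$ sends a monomial $x^\alpha = x_1^{\alpha_1}\cdots x_n^{\alpha_n}$ to $t^{\alpha_1 + \alpha_2 M + \cdots + \alpha_n M^{n-1}}$, and base-$M$ representation is injective on $\{0,\ldots,M-1\}^n$, so the exponents attached to the distinct $\alpha$ appearing in the (finite) support of any $f_j$ are pairwise distinct. Hence $j^*(f_j) \in R[t]$ has precisely the same multiset of nonzero coefficients as $f_j$ itself. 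The fact that $j$ is a closed immersion is automatic, since the ring map $R[x_1,\ldots,x_n] \to R[t]$ in question sends $x_1 \mapsto t$ and is therefore surjective.

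To conclude, for every $s \in S$ some $f_j$ has a coefficient outside $\mathfrak{p}_s$; by the previous step so does $j^*(f_j)$, so $j^{-1}(D(f_j)) = D(j^*(f_j)) \subseteq \mathbb{A}^1_S$ meets the fiber at $s$ and lies inside $j^{-1}(\Uc)$. I do not expect a serious obstacle. The only non-mechanical step is the finite extraction $f_1,\ldots,f_k$, which rests solely on the fact that $1 \in \mathfrak{a}$ is expressible as a finite $R$-linear combination of generators, so no noetherian or quasi-compactness hypothesis on $R$ or $S$ is required; the rest is the elementary base-$M$ trick that prevents monomial cancellation under the substitution.
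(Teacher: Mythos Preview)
Your argument is correct and follows essentially the same route as the paper: reduce to finitely many polynomials whose coefficients generate the unit ideal of $R$, then choose the exponents $m_i$ so that distinct monomials in these polynomials remain distinct under the substitution $x_i \mapsto t^{m_i}$. The paper phrases the first step as ``we may assume $\Uc$ is quasi-compact'' and describes the complement by finitely many $f_i$, whereas you work directly with principal opens $D(f_j)\subseteq\Uc$ and extract a finite subfamily from the relation $1\in\mathfrak{a}$; these are equivalent, and your explicit choice $m_i=M^{i-1}$ is just a concrete instance of the injective linear form the paper invokes abstractly.
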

\begin{proof}
We may assume that $\Uc$ is quasi-compact.  The  complement of $\Uc$ in $\mathbb{A}^n_S$ is then
defined by a finite set of polynomials $f_i \in R[X_1,\ldots,X_n]$.  The surjectivity of $\Uc$
means that the coefficients of the $f_i$'s generate the unit ideal of $R$.  Consider the finite
set $\Sigma \subset \mathbb{N}^n$ of all multi-exponents occurring in the $f_i$'s.  It is easy
to see that one can find positive integers $m_2,\ldots,m_n$ such that the linear
form $(x_1,\ldots,x_n) \to x_1 + \sum_{\ell = 2}^n m_\ell x_\ell$ maps $\Sigma$ injectively
into $\mathbb{N}$.  But this means that for each $i$ the polynomial $f_i(t, t^{m_2},\ldots,t^{m_n}) \in R[t]$
has the same set of coefficients as $f_i$.  In particular, these coefficients still generate $R$.
\end{proof}

To complete the proof of Proposition \ref{prop:Skolprop}, it will suffice  to show that (4) implies (2).
Let $\Uc \subset \mathbb{A}_S^1$ be open and surjective over $S$.  Choose an $S$-morphism
$\Yc \to \Uc$ as in (4).  On passing to  a locally free cover of $\Yc$, we may assume that $\Yc = \mathrm{Spec}(R_1)$ is locally free of 
constant (positive) rank $r$ over $S$.  The composite map $\Yc \to \mathbb{A}^1_R  = \mathrm{Spec}(R[t])$
gives rise to a morphism $R[t] \to R_1$ mapping $t$ to an element $z$. Let $F(t) \in R[t]$ denote
the characteristic polynomial of $z$, and put $\Yc' = \mathrm{Spec}(R[t]/(F(t)))$.  Then $\Yc'$ is finite
and free of rank $r$ over $R$, and it is easy to check that $\Yc'$ is set theoretically the
image of $\Yc$ because $\Yc \to \Uc$ factors through $\Yc'$ by the Cayley-Hamilton Theorem.
In particular, $\Yc' \subset \Uc$ as desired.\qed

\section{Proof of Theorem \ref{thm:main}}
\label{s:proof}

\subsection{Property (P) implies property (S)}

As usual, put $S = \mathrm{Spec}(R)$, and assume $R$ has property (P).  Consider
an open subscheme $\Uc \subset \mathbb{P}^1_S$ which is surjective over $S$.  It will 
suffice to show that there is a subscheme $\Yc \subset \Uc$ which is finite, locally free and
surjective over $S$.  We may assume that $\Uc$ is quasicompact and contained in $\mathbb{A}^1_S$.
Let $\Zc \subset \mathbb{P}^1_S$ be a closed subscheme of finite presentation 
with support $\mathbb{P}^1_S \smallsetminus  \Uc$.  Then $\Zc$ is finite over $S$.  By property (P),
there is an integer $m > 0$ such that the invertible $\mathscr{O}_\Zc$-module $\mathscr{O}_\Zc(m)$
is trivial, i.e. has a nonvanishing section $s_\Zc$.  Now $\Zc$ and $s_\Zc$ can be defined over a subring $R_0 \subset R$
which is finitely generated over $\mathbb{Z}$.  To find $\Yc$ we may replace $R$ by $R_0$, so
that we may now assume $R$ is Noetherian.   The  ideal sheaf $\mathscr{I}_\Zc$ is coherent, so replacing $m$ by a sufficiently
large multiple, we may assume that $\mathrm{H}^1(\mathscr{I}_\Zc(m)) = 0$. This  implies that
the restriction map $\mathrm{H}^0(\mathscr{O}_{\mathbb{P}^1}(m)) \to 
\mathrm{H}^0(\mathscr{O}_{\Zc}(m)) $ is surjective.  In particular, $s_\Zc$ extends to a section
$s \in \mathrm{H}^0(\mathscr{O}_{\mathbb{P}^1}(m))$.  We can take $\Yc$ to be the scheme of
zeros of $s$.  Indeed, the fact that $s_\Zc$ is a trivialization on $\Zc$ means that $\Yc \subset \Uc \subset  \mathbb{A}^1_S$.
We can therefore view $s$ as a polynomial of degree $m$ in the standard coordinate $t$
of $\mathbb{A}^1_S$.  The leading coefficient of this polynomial is invertible because $s$
does not vanish at infinity, so $\Yc \cong \mathrm{Spec}(R[t]/(s))$ is free of rank $m$ over $S$.

\subsection{Property (S) implies property (F)}
\label{s:SgivesF}

We will need the following general fact.

\begin{subproposition}
\label{prop:allright}
 Without assumption on the commutative ring $R$, let $g:\Yc \to S = \mathrm{Spec}(R)$
be a projective morphism with fiber dimension $\leq\delta$ for some integer $\delta\geq0$.  Let $\mathscr{L}$
be an invertible sheaf on $\Yc$ which is very ample with respect to $g$.  After
base change from $S$ to an $S$-scheme $S'$ which is surjective over $S$ and isomorphic
to an open subscheme of an an affine $S$-space $\mathbb{A}^N_S$, there is a section
of $\mathscr{L}$ over $\Yc$ whose scheme of zeros has fiber dimension $\le \delta-1$.
If, moreover, $R$ satisfies condition {\rm (S)}, there is an integer $m > 0$ and a section
of $\mathscr{L}^{\otimes m}$ on $\Yc$ (without any base change) with the same
property.
\end{subproposition}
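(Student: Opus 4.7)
The plan is a two-stage relative Bertini argument: first, parametrize all sections of $\mathscr{L}$ by an affine $S$-space and identify the open subset of those whose zero scheme has fiber dimension $\leq\delta-1$; then, assuming property (S), apply (S) to that open subset to produce a finite free cover of $S$, and descend by taking a norm to produce a section on $\Yc$ itself.

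For the first assertion, use that $\mathscr{L}$ is very ample relative to $g$ to embed $\Yc$ as a closed subscheme of some $\mathbb{P}^N_S$ with $\mathscr{L}\cong\mathscr{O}(1)|_\Yc$. Let $W=\mathbb{A}^{N+1}_S$ parametrize sections of $\mathscr{O}_{\mathbb{P}^N_S}(1)$, let $\sigma_{\mathrm{univ}}$ be the tautological section on $\Yc_W=\Yc\times_S W$, and let $Z\subset\Yc_W$ be its zero scheme. Since $Z\to W$ is projective, upper semicontinuity of fiber dimension for proper morphisms (EGA IV$_3$, 13.1.3) implies that $B=\{w\in W:\dim Z_w\geq\delta\}$ is closed in $W$; set $\Uc=W\smallsetminus B$. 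For surjectivity of $\Uc\to S$, fix $s\in S$: the fiber $\Yc_s\subset\mathbb{P}^N_{\kappa(s)}$ has only finitely many $\delta$-dimensional irreducible components, and the linear forms in $W_s=\mathbb{A}^{N+1}_{\kappa(s)}$ vanishing identically on any given such component form a proper $\kappa(s)$-linear subspace of $W_s$. Hence $B_s$ is a finite union of proper linear subspaces of $W_s$, which is a proper closed subscheme, so $\Uc_s\neq\emptyset$ as a scheme and $\Uc\to S$ is surjective. Taking $S'=\Uc$ and pulling $\sigma_{\mathrm{univ}}$ back to $\Yc\times_S S'$ proves the first assertion.

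For the second assertion, apply property (S) to this $\Uc$ in the form of Proposition \ref{prop:Skolprop}(1) to obtain a closed subscheme $\Yc''\subset\Uc$ which is finite free of constant rank $r\geq 1$ and surjective over $S$. The projection $\pi=\mathrm{pr}_1\colon\Yc\times_S\Yc''\to\Yc$ is then finite locally free of rank $r$, and pulling $\sigma_{\mathrm{univ}}$ back along $\Yc''\hookrightarrow\Uc$ yields a section $\tau$ of $\pi^*\mathscr{L}$ on $\Yc\times_S\Yc''$. Its norm $N_\pi(\tau)\in\Gamma(\Yc,\mathscr{L}^{\otimes r})$---defined as the determinant of the $\mathscr{O}_\Yc$-linear map $\pi_*\mathscr{O}_{\Yc\times_S\Yc''}\to\pi_*\mathscr{O}_{\Yc\times_S\Yc''}\otimes\mathscr{L}$ obtained from multiplication by $\tau$ via the projection formula---has zero scheme set-theoretically equal to $\pi(Z(\tau))$. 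Over each $s\in S$ this image is a finite union, indexed by the finite set $\Yc''_s$, of subschemes of $\Yc_s$ of dimension $\leq\delta-1$ (by construction of $\Uc$), hence itself of dimension $\leq\delta-1$. Taking $m=r$ finishes the proof.

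The main obstacle is the relative Bertini step, namely openness of the ``good'' locus and its surjectivity onto $S$ for a possibly non-Noetherian ring $R$; this relies on semicontinuity of fiber dimension for projective morphisms, available in EGA without Noetherian hypotheses. The remaining ingredients---existence and zero-scheme behaviour of the norm of a section along a finite locally free morphism, and finiteness of each fiber $\Yc''_s$---are standard bookkeeping.
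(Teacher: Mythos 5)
Your proof is correct and follows essentially the same route as the paper's: the universal section over the affine space of linear forms, Chevalley/EGA semicontinuity to get an open locus of good parameters surjecting onto $S$, then property (S) to produce a finite free surjective cover inside that locus and a norm along the induced finite locally free projection to descend to a section of a power of $\mathscr{L}$ on $\Yc$. The only differences are cosmetic (indexing of the projective embedding, and checking the fiber-dimension of the zero set of the norm pointwise rather than invoking finiteness of $\pi_\Yc$ directly).
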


\begin{proof} We may assume that $\Yc \subset \mathbb{P}^{N-1}_S$ and $\mathscr{L} = \mathscr{O}_\Yc(1)$
since $S$ is affine.  We may identify sections of $\mathscr{O}_{\mathbb{P}^{N-1}_S}(1)$ with sections of the
vector bundle $E := \mathbb{A}^N_S$ over $S$.  This identification is compatible with base change.  In particular,
we have a universal section of $\mathscr{O}_{\mathbb{P}^{N-1}_E}(1)$ whose scheme of zeros in $\Yc$
is the universal hyperplane section $\Hc \subset \Yc \times_S E \subset \mathbb{P}^{N-1}_S \times E = \mathbb{P}^{N-1}_E$.
By Chevalley's semi-continuity theorem (\cite[13.1.5]{GD}), the locus $S' \subset E$ over which the fibers of $\Hc$
have dimension $\leq\delta-1$ is open.  Moreover, $S'$ surjects onto $S$ since for each geometric point $\xi:\mathrm{Spec}(k) \to S$
of $S$ there is a hyperplane in $\mathbb{P}^{N-1}_k$ which does not contain any component of the fiber $\Yc_\xi$.  Thus $S'$
provides the required base change.

Assume now that $R$ satisfies condition (S).  Applying this property to the above $S'$, we obtain a finite,
free and surjective $S$-scheme $\pi:T \to S$ contained in $S'$.  By restricting sections to $T$,
we obtain a section $h$ of the pullback of $\mathscr{L}$ to $\Yc \times_S T$ whose zero set $\Zc$ has
fiber dimension $\leq\delta-1$ over $T$.  Denote by $m > 0$ the degree of $\pi$. The natural projection
$\pi_\Yc:\Yc \times_S T \to \Yc$ is still free of degree $m$.  The norm of $h$ with respect to $\pi_\Yc$ is
a section of $\mathscr{L}^{\otimes m}$ on $\Yc$.  The zero set of this section is $\pi_\Yc(\Zc)$, which has
the same fiber dimension as $\Zc$ since $\pi_\Yc$ is finite.  This completes the proof.
\end{proof}

We may now show that property (S) implies that $R$ has the coordinate hyperplane property (F).
Let $d$, $f:\Xc \to S$, $\mathscr{L}$, $i$ and $h_1,\ldots,h_i$ be as in Definition \ref{def:coordhyp}.
By replacing $\mathscr{L}$ by $\mathscr{L}^{\otimes e}$ for some large enough $e$, and each
$h_j$ by $h_j^{\otimes e}$, we may assume that $\mathscr{L}$ is very ample.  We apply Proposition 
\ref{prop:allright} inductively, starting with the $S$-scheme $\Yc = \cap_{j =1}^i \Xc_j$ and the integer $\delta=d-i$.  We get (after replacing
$\mathscr{L}$ and the $h_j$'s by suitable powers of themselves, which does not change the $\Xc_j$'s) 
sections $h_1,\ldots,h_{d+1}$  of $\mathscr{L}$ whose common zero set has fiber dimension
$\le -1$, i.e.\ is empty.  Therefore $(h_1:\cdots:h_{d+1})$ is a well defined $S$-morphism
$q:\Xc \to \mathbb{P}^d_S$.  Moreover, $q^* \mathscr{O}_{\mathbb{P}^d}(1) = \mathscr{L}$ is ample, so
$q$ must be finite.  By construction, for all $j \le i$, the pullback of the $j^{th}$ homogeneous coordinate
is a power of $h_j$, hence its zero set is set theoretically equal to $\Xc_j$ and equal as a Cartier divisor to a multiple
of $\Xc_j$.  This completes the proof that property
(S) implies property (F).

\subsection{Property (F) implies property (P)}
\label{s:CtoP}

We will need the following general result.

\begin{subproposition}
\label{prop:flatprop}  Let $S$ be a scheme, and suppose $\Xc$ and $\Yc$ are two finitely presented
$S$-schemes.  Let $\pi:\Xc \to \Yc$ be a  finite $S$-morphism. Assume that $\Yc \to S$ is
flat, with pure $d$-dimensional regular fibers.  Then $\pi$ is flat if and only if $\Xc \to S$
is flat with pure $d$-dimensional Cohen-Macaulay fibers.
\end{subproposition}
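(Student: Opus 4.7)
The plan is to prove each implication by reducing to a fiberwise statement and then invoking miracle flatness.

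For the ``only if'' direction, I would begin by noting that composition of flat morphisms $\Xc\to\Yc\to S$ shows $\Xc\to S$ is flat. On each fiber, $\pi_s:\Xc_s\to\Yc_s$ is finite and flat, hence $\mathcal{O}_{\Xc_s}$ is locally a finite free $\mathcal{O}_{\Yc_s}$-module. Since $\Yc_s$ is regular (a fortiori Cohen-Macaulay) of pure dimension $d$, and both the Cohen-Macaulay property and pure $d$-dimensionality descend through finite free extensions of the structure sheaf, $\Xc_s$ inherits these properties. This direction should be essentially formal.

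For the converse, the key tool is the flatness-by-fibers criterion (EGA IV, 11.3.10): since $\Yc\to S$ is flat and both $\Xc$, $\Yc$ are finitely presented over $S$, $\pi$ is flat iff every restriction $\pi_s:\Xc_s\to\Yc_s$ is flat. The fibers are of finite type over the field $\kappa(s)$, hence Noetherian, so flatness of $\pi_s$ reduces to a statement on local rings. Fix $x\in\Xc_s$, set $y=\pi_s(x)$, and let $A=\mathcal{O}_{\Yc_s,y}$, $B=\mathcal{O}_{\Xc_s,x}$. Then $A$ is regular, $B$ is Cohen-Macaulay by hypothesis, and $B/\mathfrak{m}_A B$ is zero-dimensional because $\pi_s$ is finite. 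I would then apply Matsumura's miracle flatness theorem (\emph{Commutative Ring Theory}, Theorem 23.1), which asserts that under these conditions $A\to B$ is flat as soon as $\dim B = \dim A + \dim(B/\mathfrak{m}_A B) = \dim A$.

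The main obstacle is checking this dimension equality $\dim A = \dim B$. I would argue as follows: the equidimensionality and catenarity of $\Yc_s$ (it is regular of pure dimension $d$) give $\dim A = d - \dim\overline{\{y\}}$ computed in $\Yc_s$, and analogously $\dim B = d - \dim\overline{\{x\}}$ in $\Xc_s$ using that Cohen-Macaulay Noetherian rings are catenary and $\Xc_s$ is pure of dimension $d$. Since $\pi_s$ is finite, it sends closed subsets to closed subsets of the same dimension, so $\dim\overline{\{x\}} = \dim\overline{\{y\}}$, yielding the desired equality. Once this is in hand, miracle flatness concludes the proof.
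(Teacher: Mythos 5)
Your proposal is correct and follows essentially the same route as the paper, which proves the statement by combining the fibral flatness criterion (EGA IV, 11.3.11) with miracle flatness (Matsumura, Thm.\ 46, p.\ 140, i.e.\ \emph{Commutative Ring Theory} 23.1) applied to the fibers. You merely spell out the details the paper leaves implicit, namely the dimension equality $\dim \mathcal{O}_{\Xc_s,x}=\dim\mathcal{O}_{\Yc_s,y}$ via equidimensionality, catenarity and finiteness, and the easy ``only if'' direction.
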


\begin{proof} Combine  {\cite[Thm. 46, p. 140]{Mat}}, applied to the fibers, with ``flatness by fibers" \cite[IV, 3, 11.3.11]{GD}.\end{proof}

In order to now prove that a ring $R$ has property (P), we will in fact only use property (F) in the special case 
$d = i = 1$ in the notation of Definition \ref{def:coordhyp}.  Let $R'$ be a finite $R$-algebra and define
$S' = \mathrm{Spec}(R')$.  Suppose $M$ is an invertible $R'$-module.  We must show that $M^{\otimes m} \cong R'$
for some $m > 0$.  Consider the locally free rank one $\mathscr{O}_{S'}$-module $\mathscr{M}$ associated to
$M$, and the corresponding $\mathbb{P}^1$ bundle $\Xc = \mathbb{P}(\mathscr{O}_{S'} \oplus \mathscr{M})$.
This bundle has two disjoint natural sections over $S'$:  The section $s_\infty$ whose complement is isomorphic to
the vector bundle $\mathbb{M} = \mathbb{V}(\mathscr{M})$ and the zero section $s_0$ of $\mathbb{M}$.
These sections define divisors $D_\infty$ and $D_0$ which are ample with respect to $S'$ (and therefore
also ample with respect to $S$).  Put $\mathscr{L} = \mathscr{O}_\Xc(D_\infty + D_0)$, and let $h$ be the canonical
section of $\mathscr{L}$ having divisor $\Yc = D_\infty + D_0$.  We can now apply property (F) to this
data.  We obtain a finite morphism $\Xc \to \mathbb{P}^1_S$ such that $\Yc$ is the set-theoretic inverse image of, say,
the section $\infty$ of $\mathbb{P}^1_S$.  Since $\Xc$ is an $S'$-scheme, this gives rise to a finite $S'$-morphism
$p:\Xc \to \mathbb{P}^1_{S'}$ mapping $\Yc$ to the section $\infty$.  By Proposition \ref{prop:flatprop},
$p$ must be flat since $\Xc$ and $\mathbb{P}^1_{S'}$ are smooth and one-dimensional over $S'$.
We conclude that $p$ is in fact locally free since it is finite, flat and of finite presentation.  Clearly $p$
is also surjective.  The inverse image of the zero section of $\mathbb{P}^1_{S'}$ is therefore a finite
locally free $S'$-scheme $T$ which surjects onto $S'$ and which is contained in the punctured line
bundle $\Xc\smallsetminus \Yc = \mathbb{M}\smallsetminus D_0$.  This means that $\mathscr{M}$ is trivialized by
the base change $T \to S'$, so $\mathscr{M}$ has finite order in $\mathrm{Pic}(S')$.

\section{Finite morphism capacities}
\label{s:finitemor}

\subsection{The definition}
\label{s:defgencap}
Let $F$ be a global field, and let $M(F)$ be the set of places of $F$.
Define $\mathbb{C}_v$ to be the completion of an algebraic closure $\overline{F}_v$
of the completion $F_v$ of $F$ at $v$.  Define $| \ |_v:\mathbb{C}_v \to \mathbb{R}$
to be the unique extension to $\mathbb{C}_v$ of the normalized absolute value
on $F_v$.    Define $\mathcal{R}$ to be the set of functions $r:M(F) \to \mathbb{R}$
such that $r(v) \ge 0 $ for all $v$ and $r(v) = 1$ for almost all $v$.  We have a norm
$| \ |:\mathcal{R} \to \mathbb{R}$ defined by $$|r| = \prod_{v \in M(F)} r(v).$$

 Let   $d \ge 1$ be an integer.
 We let $(y_1:\cdots:y_{d+1})$ be homogeneous coordinates on $\mathbb{P}^d_{F}$,
 and we define $H$ to be the hyperplane $y_1 = 0$. Then we have affine
 coordinates on $A^d = \mathbb{P}^d \smallsetminus H$ given by $z_i = y_i/y_1$ for $i = 2,\ldots,d+1$.  
 For  $r \in \mathcal{R}$ we define the adelic polydisk $B(r) = \prod_{v \in M(F)} B_v(r(v))$
 by setting 
 $$B_v(r(v)) = \{z = (z_2,\ldots,z_{d+1}) \in A^d(\mathbb{C}_v): |z_i |_v \le r(v) \quad \mathrm{for\ all }\quad i\}$$

  Let $\X_1$ be an ample effective divisor on the smooth projective variety $\X$ over $F$.
Suppose $\mathbb{E} = \prod_{v \in M(F)} E_v$ is an adelic set of points of $\X$ satisfying the
standard hypotheses described in (i) and (ii) of the introduction.  Recall that $\mathcal{U}(\mathbb{E})$
is the set of all open adelic neighborhoods $\mathbb{U} = \prod_v U_v$
 of $\mathbb{E}$ which satisfy the standard hypotheses.   
Let $T(\X_1)$ be the set of all finite morphisms $\pi:X \to \mathbb{P}^d$ over $F$ such that $\pi^{-1} (H) = m(\pi) \X_1$
as Cartier divisors for some integer $m(\pi) > 0$.  

\begin{definition}
\label{def:finim}The finite morphism capacity $\gamma_{\mathrm{F}}(\mathbb{U},\X_1)$ of $\mathbb{U} \in \mathcal{U}(\mathbb{E})$
relative to $\X_1$ is  the supremum of 
$$|r|^{d \cdot \mathrm{deg}(\pi)/m(\pi)^{d+1}}$$
over all $\pi \in T(\X_1)$ and $r \in \mathcal{R}$
such that $\pi^{-1}(B(r)) \subset \mathbb{U}$;
this supremum is defined to be $0$ if no such $r$ and $\pi$ exist.  Define 
\begin{equation}
\label{eq:gin}
\gamma_{\mathrm{F}}(\mathbb{E},\X_1) = \inf \{ \gamma_{\mathrm{F}}(\mathbb{U},\X_1): \mathbb{U} \in \mathcal{U}(\mathbb{E})\}.
\end{equation}
\end{definition}

\subsection{Generalizing the converse part of the Fekete-Szeg\H{o} Theorem.} 
\label{s:conversepart}

In this paragraph we will prove Theorem \ref{thm:converseFS}.  Suppose \hbox{$\gamma_{\mathrm{F}}(\mathbb{E},\X_1) > 1.$}
We must prove that for every  $\mathbb{U} \in \mathcal{U}(\mathbb{E})$,
the set of points of $\X(\overline{F})$ which have all their Galois conjugates in $\mathbb{U}$ is Zariski dense. 
In view of (\ref{eq:gin}), it will suffice to consider the case in which $\mathbb{E} = \mathbb{U}$ is open.  

Since $\gamma_{\mathrm{F}}(\mathbb{U},\X_1) >1$, there are $\pi$ and $r$ as in Definition \ref{def:finim} for which $|r| > 1$.   Since $\pi:\X \to \mathbb{P}_F^d$ is finite and defined over $F$, the result now follows from:

\begin{lemma}
\label{lem:projZardense}  If $r \in \mathcal{R}$ and $|r| > 1$ then there is a Zariski dense
set of points of $\mathbb{P}^d(\overline{F})$ which have all their Galois conjugates over $F$ in $B(r)$.
\end{lemma}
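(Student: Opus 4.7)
The plan is to reduce the assertion to the one-dimensional case on $\mathbb{P}^1$ and then invoke the adelic Fekete-Szeg\H{o} theorem of Cantor and Rumely.

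First, I would isolate the following one-dimensional claim. Let $D_v = \{z \in \mathbb{A}^1(\mathbb{C}_v) : |z|_v \le r(v)\}$ and $D = \prod_v D_v$. The claim is that the set
$$\Sigma = \{\alpha \in \overline{F} : \text{every } F\text{-conjugate of } \alpha \text{ lies in } D_v \text{ for all } v \in M(F)\}$$
is infinite. This is exactly the adelic Fekete-Szeg\H{o} theorem applied to $D$: the logarithmic capacity of $D_v$ with respect to $\infty \in \mathbb{P}^1$ equals $r(v)$, so the adelic capacity of $D$ with respect to $\infty$ is $|r| > 1$, and Rumely's refinement \cite{Rumely} then produces a Zariski dense, in particular infinite, collection of such algebraic points.

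Second, I would pass from $\Sigma$ to a Zariski dense subset of $A^d(\overline{F})$ coordinatewise. For any $d$-tuple $(\beta_2,\ldots,\beta_{d+1}) \in \Sigma^d$, the point $P = (\beta_2,\ldots,\beta_{d+1}) \in A^d(\overline{F})$ has every Galois conjugate of the form $(\sigma\beta_2,\ldots,\sigma\beta_{d+1})$; since each $|\sigma\beta_i|_v \le r(v)$ by the definition of $\Sigma$, this conjugate lies in $B_v(r(v))$ for every $v$, and hence in $B(r)$. A straightforward induction on $d$, using at each stage the fact that a nonzero univariate polynomial over $\overline{F}$ has only finitely many zeros in the infinite set $\Sigma$, shows that $\Sigma^d$ is already Zariski dense in $A^d$, hence in $\mathbb{P}^d$.

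The main obstacle is really just pinning down the right one-dimensional input: the classical Fekete-Szeg\H{o} theorem over $\mathbb{Q}$ with only archimedean constraints does not suffice for disks $D_v$ of radius $r(v)<1$ at finite places, nor for general global fields $F$. However, the adelic refinement of Cantor and Rumely is stated in exactly the generality needed, so this step reduces to an invocation of \cite{Rumely}. Once it is granted, the remainder of the argument is elementary.
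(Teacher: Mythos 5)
Your argument is correct in substance, but it takes a genuinely different---and much heavier---route than the paper. The paper's proof is elementary: after multiplying $r$ by a function $v \mapsto |\alpha|_v^{1/n}$ for suitable $\alpha \in F^*$ and $n$ (possible because $|r|>1$), it reduces to the case $r(v) \ge 1$ for all $v$ with strict inequality at the archimedean places, and then simply observes that the points of $A^d(\overline{F})$ all of whose coordinates are roots of unity have every Galois conjugate in $B(r)$ and form a Zariski dense set, by the same ``$I^d$ is dense in $K^d$ when $I\subset K$ is infinite'' fact you use. You instead invoke the adelic Fekete--Szeg\H{o} theorem of Cantor and Rumely on $\mathbb{P}^1$ to manufacture the infinite set $\Sigma$, and then pass to $\Sigma^d$ exactly as the paper does. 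This is logically sound (not circular: the one-dimensional theorem is an external input from \cite{Rumely}, used elsewhere in the paper), but it imports deep machinery for a statement about polydisks that roots of unity settle in two lines. One technical point you should patch if you keep your route: Rumely's theorem produces algebraic points whose conjugates lie in an arbitrary adelic \emph{neighborhood} of the given adelic set, not in the set itself, whereas you assert they lie in $D=\prod_v D_v$. This is repairable: at nonarchimedean $v$ the closed disk $D_v$ is already open in $\mathbb{C}_v$, and since $|r|>1$ is strict you may shrink the archimedean radii slightly, keeping the capacity $>1$, so that $D$ contains an adelic neighborhood of the shrunken set; with that remark your proof is complete, though the paper's is shorter and self-contained.
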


\begin{proof}  By multiplying $r$ by a function from $M(F)$ to $\mathbb{R}$ of the form $v \to |\alpha|_v^{1/n}$ for a suitable $\alpha \in F^*$
and positive integer $n$, we can reduce to the case in which  $r(v) \ge 1$ for all $v$ and $r(v) > 1$ if $v \in M_\infty(F)$.    Let $C$ be the set of points 
$(z_2,\ldots,z_{d+1}) \in A^d(\overline{F})$ such that each $z_i$ is a root of unity.  Every point of $C$ has all its conjugates in $B(r)$, so it will suffice to show $C$
is Zariski dense in $\mathbb{P}^d_F$, or equivalently in $A^d(\overline{F})=\overline{F}^d$.  This follows from the well-known fact that if $I$ is an infinite subset of a field $K$, then 
$I^d$ is Zariski dense in $K^d$.
\end{proof}
 
 \subsection{Comparing sectional capacity and finite morphism capacity.}

 \begin{lemma}
\label{lem:projectivespace} Suppose $\pi \in T(\X_1)$ and $r \in \mathcal{R}$.  The sectional capacity $S_\gamma(\pi^{-1}(B(r)),\X_1)$ equals $|r|^{d \cdot \mathrm{deg}(\pi)/m(\pi)^{d+1}}$.
\end{lemma}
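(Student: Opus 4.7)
The plan is to reduce Lemma \ref{lem:projectivespace} to the base case $\X = \mathbb{P}^d$, $\pi = \mathrm{id}$, $\X_1 = H$, where the identity becomes $S_\gamma(B(r), H) = |r|^d$, and then to transport this through $\pi$ using two standard transformation laws for the sectional capacity.

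First, I would verify the base case by a direct computation on $\mathbb{P}^d$. The monomials $z^\alpha = z_2^{\alpha_2}\cdots z_{d+1}^{\alpha_{d+1}}$ with $|\alpha|\le n$ form a basis of $H^0(\mathbb{P}^d, \mathcal{O}(nH))$, and the $v$-adic sup norm of $z^\alpha$ on $B_v(r(v))$ equals $r(v)^{|\alpha|}$. Feeding these norms into the Chebyshev-type limit defining $S_\gamma$ in \cite{RLV}, and using the asymptotic $\binom{n+d}{d}^{-1}\sum_{|\alpha|\le n}|\alpha| = \tfrac{nd}{d+1} + O(1)$ together with the $(d+1)$-normalization appropriate to the growth $\dim H^0(nH)\sim n^d/d!$, the local contribution at each place $v$ collapses to $r(v)^d$, and the product over $v$ gives $|r|^d$.

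Next I would invoke two functorial properties of the sectional capacity on a $d$-dimensional variety, both available in \cite{RLV}: the scaling law $S_\gamma(\mathbb{E}, kD) = S_\gamma(\mathbb{E}, D)^{k^{d+1}}$ for a positive integer $k$, and the finite-morphism law $S_\gamma(\pi^{-1}(\mathbb{E}), \pi^*D) = S_\gamma(\mathbb{E}, D)^{\deg(\pi)}$ for a finite surjective morphism $\pi : \X \to Y$ of equidimensional varieties. Applying the pullback law to our $\pi$ with $D = H$ and $\mathbb{E} = B(r)$, combined with the base case, yields $S_\gamma(\pi^{-1}(B(r)), \pi^*H) = |r|^{d\deg(\pi)}$. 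Because $\pi^*H = m(\pi)\X_1$ as Cartier divisors, the scaling law rewrites the left side as $S_\gamma(\pi^{-1}(B(r)), \X_1)^{m(\pi)^{d+1}}$, and extracting the $m(\pi)^{d+1}$-th root produces the desired identity.

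The main obstacle is the base-case computation: one must track the normalization constants in the definition of $S_\gamma$ carefully enough to confirm that the Chebyshev calculation on the polydisk really produces $r(v)^d$ locally and not a power of it. The finite-morphism law in the second step is likewise a normalization check, following from the fact that $\pi^*$ embeds $H^0(Y, \mathcal{O}(nD))$ into $H^0(\X, \mathcal{O}(n\pi^*D))$, preserves sup norms on $\mathbb{E}$ versus $\pi^{-1}(\mathbb{E})$, and multiplies the leading coefficient of the Hilbert polynomial by $\deg(\pi)$. Once these two normalization issues are settled, combining the scaling and pullback laws is mechanical algebra.
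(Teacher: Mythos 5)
Your proposal is correct and follows essentially the same route as the paper: the paper likewise combines the scaling law in the divisor with the pullback law for finite morphisms (both from Theorem C of \cite{RLV}) to reduce to the identity $S_\gamma(B(r),H)=|r|^d$ on $\mathbb{P}^d_F$. The only difference is that the paper cites Examples 4.1--4.3 of \cite{LR} for that base case instead of carrying out the monomial sup-norm computation you sketch, which is the same calculation in substance.
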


\begin{proof}   By the functorial properties of sectional capacity proved in Theorem C of the introduction of
 \cite{RLV} we have
\begin{align}
 \label{eq:gradual}
 S_\gamma(\pi^{-1}(B(r)),\X_1) &=& S_\gamma(\pi^{-1}(B(r)),\pi^{-1}(\X_1))^{1/m(\pi)^{d+1}}\nonumber\\
 &=&  S_\gamma(B(r),H)^{\mathrm{deg}(\pi)/m(\pi)^{d+1}}.
 \end{align}
 We are thus reduced to showing that on $\mathbb{P}^d_F$ one has $S_\gamma(B(r),H) = |r|^d$.  
This follows immediately from Example 4.3 of \cite{LR} and an explicit (classical) computation
when $d = 1$ (see Examples 4.1 and 4.2 of \cite{LR}).
\end{proof} 

\noindent {\bf Proof of Theorem \ref{thm:compare}.}

To show $\gamma_{\mathrm{F}}(\mathbb{E},\X_1) \le S_\gamma^+(\mathbb{E},\X_1)$
it will suffice to consider the case in which $\mathbb{E}$ is an open adelic set $\mathbb{U}$.  
In view of Lemma \ref{lem:projectivespace}, $\gamma_{\mathrm{F}}(\mathbb{U},\X_1)$ is the supremum of
$$|r|^{d \cdot \mathrm{deg}(\pi)/m(\pi)^{d+1}} = S_\gamma(\pi^{-1}(B(r)),\X_1)$$
over $\pi \in T(\X_1)$ 
and $r \in \mathcal{R}$ such that $\pi^{-1}(B(r)) \subset \mathbb{U}$.  Since 
$\pi^{-1}(B(r)) \subset \mathbb{U} $ we have
$$S_\gamma(\pi^{-1}(B(r)),\X_1) \le 
S_\gamma(\mathbb{U},\X_1) = S_\gamma^+(\mathbb{U},\X_1).$$ Thus 
$\gamma_{\mathrm{F}}(\mathbb{U},\X_1)$  is the supremum of a set of numbers all of
which are bounded by $S_\gamma(\mathbb{U},\X_1)$. Therefore 
$\gamma_{\mathrm{F}}(\mathbb{U},\X_1) \le S_\gamma(\mathbb{U},\X_1)$.

Suppose now that $\mathbb{E} = \pi^{-1}(B(r))$ for some $\pi$ and $r \in \mathcal{R}$ above.
Then $\pi^{-1}(B(r)) = \mathbb{E}$ is trivially contained in every adelic open neighborhood $\mathbb{U}$
of $\mathbb{E}$. So the above calculations show  
$\gamma_{\mathrm{F}}(\pi^{-1}(B(r)),\X_1) = S_\gamma(\pi^{-1}(B(r)),\X_1)$.

\subsection{The case of curves.}
\label{s:curves}

In this section we prove Theorem \ref{eq:curve}, whose notations we now assume.
Thus $\X$ is a regular connected projective curve over $F$.  We may
write the effective divisor $\X_1$ on $\X$ as a positive integral combination
$\sum_i s_i x_i$ of a finite $\mathrm{Gal}(\overline{F}/F)$-stable set of points $\Tc = \{x_i\}_{i =1}^n$
of $\X(\overline{F})$.   Let $\mathbb{E}$ be an adelic set which satisfies
the standard hypotheses in the introduction and which is $\Tc$-capacitable in the sense of \cite[\S 6.2]{Rumely}.  Rumely has proved in \cite{R2} that the sectional capacity
$S_\gamma(\mathbb{E},\X_1)$ satisfies
\begin{equation}
\label{eq:Rumform}
-\ln(S_\gamma(\mathbb{E},\X_1)) = s^t \Gamma(\mathbb{E},\Tc) s
\end{equation}
when $s$ is the column vector $s = (s_i)_{i = 1}^n$
and $\Gamma(\mathbb{E},\Tc)$ is the real symmetric $n \times n$ Green's matrix arising in the
capacity theory of Cantor and Rumely. 

Recall that $D(\X_1)$ is the set of all ample effective Cartier divisors $\X'_1$ 
on $\X$ whose support $\mathrm{supp}(\X'_1)$ equals $\Tc = \mathrm{supp}(\X_1)$. In
the case of curves, $|X'_1|$ is the degree of $X'_1$, and  
$$S^+_\gamma(\mathbb{E},\Tc) =  \inf \{ S_\gamma(\mathbb{U},\X'_1)^{|X'_1|^{-2}}: {\mathbb{U} \in \mathcal{U}(\mathbb{E}),\ \  \X'_1 \in D(\X_1)} \}.$$

A real vector $r' = (r'_i)_{i = 1}^n$ will be called $F$-symmetric if $r_i = r_j$ when $x_i$ and $x_j$ are elements of $\Tc \subset \X(\overline{F})$ 
in the same orbit under the action of $\mathrm{Aut}(\overline{F}/F)$.
Let $\mathcal{P}$ be the set of $F$-symmetric vectors $r' = (r'_i)_{i = 1}^n$ such that $0 \le r'_i \in \mathbb{R}$ for all $i$ and 
$\sum_{i = 1}^n r'_i = 1$.   Let $\mathcal{P}^0$ be the elements of $\mathcal{P}$ which have
only positive entries. In \cite{R2}, Rumely has shown that sectional capacities are well defined for formal
positive real linear integral combinations $\X'_1 = \sum_i r'_i x_i$ of the points $x_i \in \Tc$
such that $r = (r_i)_i \in \mathcal{P}^0$.  

\begin{lemma}
\label{lem:firststep} Suppose $S^+_\gamma(\mathbb{E},\Tc) > 1$.  Then $\Gamma(\mathbb{E},\Tc)$ is a negative definite matrix. There
is a unique $\hat{s} = (\hat{s}_i)_i \in \mathcal{P}^0$ such that 
$$\hat{s}^t \Gamma(\mathbb{E},\Tc) \hat{s} = 
-\ln(S^+_\gamma(\mathbb{E},\Tc)).$$
 If $X'_1 =  \sum_i \hat{s}_{i}  \ x_i$ then 
\begin{equation}
\label{eq:easy}
S^+_\gamma(\mathbb{E},\Tc) = 
S_\gamma(\mathbb{E},\X'_1) = \gamma_{\mathrm{CR}}(\mathbb{E},\Tc).
\end{equation}
where $\gamma_{\mathrm{CR}}(\mathbb{E},\Tc)$ is the Cantor Rumely
capacity.
\end{lemma}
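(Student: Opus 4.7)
The plan is to reformulate the definition of $S^+_\gamma(\mathbb{E},\Tc)$ as a finite-dimensional concave optimization on the $F$-symmetric probability simplex $\mathcal{P}$, deduce negative definiteness of $\Gamma$ and existence of a unique interior maximizer from the hypothesis, and then identify the optimal value with both $S_\gamma(\mathbb{E},\X'_1)$ and the Cantor--Rumely capacity. First, since $\mathbb{E}$ is $\Tc$-capacitable (\cite[\S 6.2]{Rumely}), the outer infimum over $\mathcal{U}(\mathbb{E})$ in Definition \ref{def:adjust}(i) may be replaced by evaluation at $\mathbb{E}$. Combining (\ref{eq:Rumform}) with Rumely's extension to positive real coefficient divisors in \cite{R2}, for every $\hat{s}\in\mathcal{P}^0$ the divisor $\X'_1 := \sum_i \hat{s}_i x_i$ satisfies $|\X'_1|=1$ and $S_\gamma(\mathbb{E},\X'_1)^{|\X'_1|^{-2}} = \exp(-\hat{s}^t\Gamma\hat{s})$; by homogeneity (rescaling integer coefficient vectors to unit sum leaves the normalized capacity unchanged) and continuity of the quadratic form, the definition of $S^+_\gamma$ collapses to
\begin{equation*}
-\ln S^+_\gamma(\mathbb{E},\Tc) \;=\; \sup_{\hat{s}\in\mathcal{P}^0} \hat{s}^t\,\Gamma(\mathbb{E},\Tc)\,\hat{s}.
\end{equation*}
The hypothesis $S^+_\gamma(\mathbb{E},\Tc)>1$ therefore forces this supremum to be strictly negative, so $\hat{s}^t\Gamma\hat{s}<0$ throughout $\mathcal{P}^0$.

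To upgrade this to negative definiteness of $\Gamma$ on all of $\mathbb{R}^n$, I would appeal to the Galois-equivariance of $\Gamma$ together with Rumely's theorem from \cite{Rumely} that the Green's matrix of an adelic set with strictly positive Cantor--Rumely capacity is negative definite. I expect this to be the principal technical obstacle, since negativity on the $F$-symmetric affine span of $\mathcal{P}$ does not by itself give negative definiteness on the non-trivial Galois-isotypic components of $\mathbb{R}^n$; one must descend place by place and invoke the negative definiteness of local Green's-function kernels for capacitable local sets. Once this is in hand, $\hat{s}\mapsto\hat{s}^t\Gamma\hat{s}$ is strictly concave on $\mathbb{R}^n$, and so attains its maximum on the compact simplex $\mathcal{P}$ at a unique point $\hat{s}$. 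The interiority $\hat{s}\in\mathcal{P}^0$ will then follow from Rumely's theorem that the equilibrium distribution of a $\Tc$-capacitable adelic set has full support on $\Tc$: were $\hat{s}_i=0$ for some $i$, the Lagrange conditions for the simplex constraint together with strict concavity would permit a first-order perturbation toward $x_i$ strictly increasing $\hat{s}^t\Gamma\hat{s}$, contradicting maximality.

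Finally, the Cantor--Rumely capacity is characterized by the same quadratic optimization, $-\ln\gamma_{\mathrm{CR}}(\mathbb{E},\Tc) = \sup_{\hat{s}\in\mathcal{P}^0}\hat{s}^t\Gamma\hat{s}$, via the duality between the Robin constant and the transfinite diameter developed in \cite{Rumely}. Combined with the reduction above this yields $\gamma_{\mathrm{CR}}(\mathbb{E},\Tc) = S^+_\gamma(\mathbb{E},\Tc)$, while the middle equality in (\ref{eq:easy}) is then immediate: for the optimal $\X'_1 = \sum_i\hat{s}_i x_i$ one has $|\X'_1|=1$, so $S_\gamma(\mathbb{E},\X'_1)^{|\X'_1|^{-2}} = S_\gamma(\mathbb{E},\X'_1) = \exp(-\hat{s}^t\Gamma\hat{s}) = S^+_\gamma(\mathbb{E},\Tc)$.
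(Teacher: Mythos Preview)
Your overall strategy---reduce $-\ln S^+_\gamma(\mathbb{E},\Tc)$ to $\sup_{s\in\mathcal P^0} s^t\Gamma(\mathbb{E},\Tc)s$, deduce negative definiteness from the hypothesis, and then identify the optimum with $\mathrm{Val}(\Gamma)=-\ln\gamma_{\mathrm{CR}}$---matches the paper's. But the step you yourself flag as the ``principal technical obstacle'' is a genuine gap, and your proposed fix does not work.

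You correctly observe that knowing $s^t\Gamma s<0$ for all $s$ in the $F$-symmetric probability simplex does not give negative definiteness on $\mathbb{R}^n$. Your suggested remedy---invoking a ``theorem of Rumely that the Green's matrix of an adelic set with strictly positive Cantor--Rumely capacity is negative definite'' and then descending place by place to local Green's kernels---is misdirected. Since $\gamma_{\mathrm{CR}}=\exp(-\mathrm{Val}(\Gamma))$ is always positive, such a statement would say every Green's matrix is negative definite, which is false. What Rumely actually proves (\cite[Lemma~5.1.7]{Rumely}) is the equivalence $\mathrm{Val}(\Gamma)<0\iff\Gamma$ negative definite, and this is a purely matrix-game-theoretic fact, not a local potential-theoretic one. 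The paper exploits exactly this: it argues by contradiction that $\mathrm{Val}(\Gamma(\mathbb{E},\Tc))<0$. Assuming $\mathrm{Val}(\Gamma(\mathbb{E},\Tc))\ge 0$, one approximates $\Gamma(\mathbb{E},\Tc)$ by $\Gamma(\mathbb{U},\Tc)$ with $\mathrm{Val}(\Gamma(\mathbb{U},\Tc))\ge -\delta$, perturbs by $2\delta J$ (the all-ones matrix) to force positive value, and then \cite[Lemma~5.1.7]{Rumely} produces $s_0\in\mathcal P^0$ with $s_0^t\Gamma(\mathbb{U},\Tc)s_0>-2\delta$; letting $\delta\to 0$ contradicts $-\ln S^+_\gamma<0$. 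Once $\mathrm{Val}<0$, the same lemma gives negative definiteness for free.

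There is a related circularity in your final paragraph: you assert $-\ln\gamma_{\mathrm{CR}}(\mathbb{E},\Tc)=\sup_{s\in\mathcal P^0}s^t\Gamma s$, but by definition $-\ln\gamma_{\mathrm{CR}}=\mathrm{Val}(\Gamma)$, and the identification of the game value with the quadratic maximum on $\mathcal P^0$ (\cite[Lemma~5.1.6]{Rumely}) already requires negative definiteness. So you cannot use this identification to establish negative definiteness without begging the question. Likewise, your interiority argument via ``equilibrium distribution has full support'' is unnecessary: once $\Gamma$ is negative definite, \cite[Lemma~5.1.6]{Rumely} directly places the unique maximizer in $\mathcal P^0$ with all entries of $\Gamma\hat s$ equal to $\mathrm{Val}(\Gamma)$.
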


\begin{proof} Suppose $\mathbb{U} \in \mathcal{U} = \mathcal{U}(\mathbb{E})$.  It follows from (\ref{eq:Rumform}) that 
\begin{equation}
\label{eq:lneq} 
\sup \{ \frac{-\ln (S_\gamma(\mathbb{U},\X'_1))}{|X'_1|^2}:  \X'_1 \in D(\X_1) \}
 = \mathrm{sup}_{s \in \mathcal{P}^0}  \ \ s^t \Gamma(\mathbb{U},\Tc) s.
\end{equation}
Thus
\begin{equation}
\label{eq:lneq2}
- \ln(S^+_\gamma(\mathbb{E},\Tc)) =  
\sup_{\mathbb{U} \in \mathcal{U}} \sup_{s \in \mathcal{P}^0}  \ \ s^t \Gamma(\mathbb{U},\Tc) s.
\end{equation}
By the approximation theorems of \cite[\S 4.5]{Rumely}, for every $\epsilon > 0$,
there is a $\mathbb{U} \in \mathcal{U}(\mathbb{E})$ such that every entry of
$\Gamma(\mathbb{U},\Tc) - \Gamma(\mathbb{E},\Tc)$
is bounded by $\epsilon$ in absolute value.

Let $\mathrm{Val}(\Gamma)$ be the value of a square $n \times n$ matrix $\Gamma$ as a matrix game (see \cite[p. 327]{Rumely}).  Then 
\begin{equation}
\label{eq:define}
\gamma_{\mathrm{CR}}(\mathbb{E},\Tc) = \mathrm{exp}(-\mathrm{Val}(\Gamma(\mathbb{E},\Tc)))
\end{equation} by definition (c.f. \cite[Def. 5.1.5]{Rumely}). 

Suppose
first that $\mathrm{Val}(\Gamma(\mathbb{E},\Tc)) \ge 0$. Let $\delta > 0$ be given. We can then find a $\mathbb{U} \in \mathcal{U}$ such that $\mathrm{Val}(\Gamma(\mathbb{U},\Tc)) \ge -\delta$.
Let $J$ be the matrix
of the same size as $\Gamma(\mathbb{U},\Tc)$ which has every entry equal to $1$.  Then $\Gamma(\mathbb{U},\Tc)+ 2 \delta J$
is a symmetric matrix whose only negative entries are on the diagonal,
and $$\mathrm{Val}(\Gamma(\mathbb{U},\Tc)+ 2 \delta J) = \mathrm{Val}(\Gamma(\mathbb{U},\Tc)) + 2 \delta   > 0.$$ Hence by
\cite[Lemma 5.1.7]{Rumely}, there is a vector $s_0 \in \mathcal{P}^0$ such
that $$(\Gamma(\mathbb{U},\Tc)+ 2 \delta J) s_0$$ has all positive entries. Then
$0 < s_0^t \Gamma(\mathbb{U},\Tc) s_0 + s_0^t 2 \delta J s_0  = s_0^t \Gamma(\mathbb{U},\Tc) s_0 +2 \delta$, so
$$\sup_{s \in \mathcal{P}^0}  \ \ s^t \Gamma(\mathbb{U},\Tc) s \ge -2 \delta.$$
Since $\delta > 0$ was arbitrary,
we conclude from (\ref{eq:lneq2}) that $- \ln(S^+_\gamma(\mathbb{E},\Tc)) \ge 0$. This contradicts  the assumption that $S^+_\gamma(\mathbb{E},\Tc) > 1$. Hence $\mathrm{Val}(\Gamma(\mathbb{E},\Tc)) < 0$.  

We know from \cite[Thm. 5.1.10]{Rumely} that 
\begin{equation}
\label{eq:lowerbound}
\mathrm{Val}(\Gamma(\mathbb{E},\Tc)) \ge \mathrm{Val}(\Gamma(\mathbb{U},\Tc))
\end{equation} 
for $\mathbb{U} \in \mathcal{U}$.  Thus if $\mathbb{V} \in \mathcal{U}$ or $\mathbb{V} = \mathbb{E}$ and $\Gamma = \Gamma(\mathbb{V},\Tc)$, we have
$\mathrm{Val}(\Gamma) < 0$. 
Rumely shows 
in \cite[Lemma 5.1.7]{Rumely} that this implies $ s \to s^t \Gamma s$ is a negative definite quadratic form. Hence this quadratic form
has a  maximum on the space $W$ of all real $F$-symmetric vectors $s = (s_i)_i$  
such that $\sum_i s_i = 1$ .  The matrix $\Gamma = (\tau_{ij})_{i,j}$ is symmetric and invertible, and
$\tau_{i', j'} = \tau_{i,j}$ if $x_{i'} = \sigma(x_i)$ and $x_{j'} = \sigma(x_j)$ for some $\sigma \in \mathrm{Aut}(\overline{F}/F)$. Calculus
implies that at each  $\hat{s} \in W$ where the maximum of $ s \to s^t \Gamma s$ is obtained, the 
vector $\Gamma \hat{s}$ must have all equal and nonzero components. If $s'$ is
another such vector, then $s' - q\hat{s}$ is in the kernel of $\Gamma$ for some
scalar $q$.  Because $\Gamma$ is invertible, we conclude that $s' = q \hat{s}$ so in fact $s' = \hat{s}$ since  $s', \hat{s} \in W$.
Hence $\hat{s}$ is unique.  Rumely shows  in \cite[Lemma 5.1.6]{Rumely} that the maximum of
$ s \to s^t \Gamma s$ is attained at a point $\hat{s} \in \mathcal{P}^0$,
where all the components of $\Gamma \hat{s}$ are equal to $\mathrm{Val}(\Gamma)$.
Thus the quadratic form $s \to s^t \Gamma s$ achieves its (unique) maximum over
the compact set $\mathcal{P}$ at the interior point $\hat{s} \in \mathcal{P}^0$,
and this maximum is $\mathrm{Val}(\Gamma)$.

In view of (\ref{eq:lowerbound}), and the fact that we can find a sequence of $\mathbb{U} \in \mathcal{U}(\mathbb{E})$ for which the corresponding matrices 
$\Gamma(\mathbb{U},\Tc)$ converge to $\Gamma(\mathbb{E},\Tc)$,
we conclude from (\ref{eq:lneq2}) that
$$- \ln(S^+_\gamma(\mathbb{E},\Tc)) = \mathrm{Val}(\Gamma(\mathbb{E},\Tc)).$$
The Lemma now follows from (\ref{eq:define}) together with the uniqueness
noted above for the $s \in \mathcal{P}$ at which $s \to s^t \Gamma(\mathbb{E},\Tc) s$
achieves its maximum.
\end{proof}

\begin{lemma}
\label{lem:longhair}  Suppose that $\mathbb{U} \in \mathcal{U} = \mathcal{U}(\mathbb{E})$.  Let $W$ be the set of all pairs $(f,r)$ of the following kind.  The
first entry of $(f,r)$ is a finite morphism 
$f:X \to \mathbb{P}^1$ such that $\hat{\X}_1 = f^{-1}(\infty)$ has the same support $\Tc$ as $\X_1$. The second entry 
of $(f,r)$ is an element  $r \in \mathcal{R}$
such that 
$$f^{-1}(B(r)) \subset \mathbb{U}.$$
Then 
\begin{equation}
\label{eq:finally}
\sup_{\X_1' \in D(\X_1)} \gamma_{\mathrm{F}}(\mathbb{U},\X'_1)^{|\X'_1|^{-2}} = \sup_{(f,r) \in W} 
\gamma_{CR}(f^{-1}(B(r)),\Tc) \le \gamma_{CR}(\mathbb{U},\Tc)
\end{equation}
where $D(\X_1)$ is the set of all ample effective divisors having support
$\Tc$.
\end{lemma}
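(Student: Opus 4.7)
\textbf{Proof proposal for Lemma \ref{lem:longhair}.}

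The plan has three steps: reparametrize the left-hand supremum as a sup over $W$, evaluate $\gamma_{\mathrm{CR}}(f^{-1}(B(r)), \Tc)$ explicitly for each $(f,r) \in W$, and conclude via monotonicity of the Cantor--Rumely capacity.

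\emph{Step 1 (Reparametrization).} For any $(f,r) \in W$ write $f^{-1}(\infty) = \sum_i e_i x_i$. The support hypothesis in the definition of $W$ gives $e_i \ge 1$ and $\sum_i e_i = \deg(f)$. For every positive integer $m = m(f)$ dividing $\gcd_i e_i$, the divisor $\X_1' := f^{-1}(\infty)/m$ lies in $D(\X_1)$ with $|\X_1'| = \deg(f)/m$, and the pair $(\pi,r) = (f,r)$ contributes $|r|^{\deg(f)/m^2}$ to $\gamma_{\mathrm{F}}(\mathbb{U}, \X_1')$. Raising this to the $|\X_1'|^{-2} = m^2/\deg(f)^2$ power produces simply $|r|^{1/\deg(f)}$, independent of $m$. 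Conversely every admissible triple $(\X_1',\pi,r)$ contributing to the LHS arises in this way, so
\[ \sup_{\X_1' \in D(\X_1)} \gamma_{\mathrm{F}}(\mathbb{U}, \X_1')^{|\X_1'|^{-2}} \;=\; \sup_{(f,r) \in W} |r|^{1/\deg(f)}. \]

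\emph{Step 2 (Term-by-term evaluation).} I aim to show $\gamma_{\mathrm{CR}}(f^{-1}(B(r)), \Tc) = |r|^{1/\deg(f)}$ for each $(f,r) \in W$. Set $\mathbb{V} := f^{-1}(B(r))$, $e := (e_i)_i$, and $s^* := e/\deg(f) \in \mathcal{P}^0$. Combining (\ref{eq:Rumform}) with the sectional-capacity pullback formula (Lemma \ref{lem:projectivespace} applied to $\pi = f$ and $\X_1' = f^{-1}(\infty)$, together with homogeneity) yields
\[ (s^*)^t\, \Gamma(\mathbb{V}, \Tc)\, s^* \;=\; -\frac{\ln|r|}{\deg(f)}. \]
It remains to check that $s^*$ is the (unique) maximizer of $s \mapsto s^t \Gamma(\mathbb{V}, \Tc) s$ on $\mathcal{P}$; by the characterization recorded in the proof of Lemma \ref{lem:firststep}, this amounts to verifying that $\Gamma(\mathbb{V}, \Tc)\, e$ has all components equal. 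I plan to prove this through the functoriality of adelic Green's functions under $f$: at each place $v$, the local Green's function on $X(\mathbb{C}_v)$ for $\mathbb{V}_v = f^{-1}(B_v(r(v)))$ with poles of orders $e_i$ at the $x_i$ is the pullback by $f$ of the local Green's function on $\mathbb{P}^1(\mathbb{C}_v)$ for $B_v(r(v))$ with simple pole at $\infty$. Computing Robin constants at each $x_j$ in a ramification-adjusted local parameter (where $f$ has local expression $t_j^{e_j}$ times a unit) shows the $j$-th entry of $\Gamma(\mathbb{V},\Tc)\, e$ is independent of $j$. From (\ref{eq:define}) I then obtain $\gamma_{\mathrm{CR}}(\mathbb{V}, \Tc) = \exp(-\mathrm{Val}(\Gamma(\mathbb{V},\Tc))) = |r|^{1/\deg(f)}$.

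\emph{Step 3 (Monotonicity) and main obstacle.} The final inequality follows from monotonicity: the inclusion $f^{-1}(B(r)) \subset \mathbb{U}$ lets me apply (\ref{eq:lowerbound}) with $\mathbb{E}$ replaced by $f^{-1}(B(r))$, giving $\mathrm{Val}(\Gamma(f^{-1}(B(r)), \Tc)) \ge \mathrm{Val}(\Gamma(\mathbb{U}, \Tc))$, hence $\gamma_{\mathrm{CR}}(f^{-1}(B(r)), \Tc) \le \gamma_{\mathrm{CR}}(\mathbb{U}, \Tc)$ via (\ref{eq:define}); taking the supremum over $W$ yields the desired $\le$. The hard part will be the functorial calculation in Step 2: transferring the classical pullback behavior of Green's functions on curves under $f$ into a uniform statement about the adelic Green's matrix $\Gamma(\mathbb{V}, \Tc)$, with careful attention to ramification indices and, at archimedean places, to the branching behavior of local Green's functions in the sense of Rumely's local capacity theory.
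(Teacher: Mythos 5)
Your Steps 1 and 3 are sound and coincide with what the paper does: for fixed $\X_1'$ the defining supremum for $\gamma_{\mathrm{F}}(\mathbb{U},\X_1')$ runs over pairs $(f,r)\in W$ with $f^{-1}(\infty)=m(f)\X_1'$, and since $m(f)\,|\X_1'|=\deg(f)$ the exponent collapses to give $|r|^{1/\deg(f)}$; the final inequality is just monotonicity of $\gamma_{\mathrm{CR}}$ under the inclusion $f^{-1}(B(r))\subset\mathbb{U}$.

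The gap is Step 2. The paper does not re-derive the identity $\gamma_{\mathrm{CR}}(f^{-1}(B(r)),\Tc)=|r|^{1/\deg(f)}$; it gets it in one line from the pullback formula for the Cantor--Rumely capacity under finite morphisms (property (B), p.~4 of \cite{Rumely}) combined with $\gamma_{\mathrm{CR}}(B(r),\infty)=|r|$ on $\mathbb{P}^1$. Your plan is to reprove this functoriality through the Green's matrix, and you explicitly leave the essential computation (pullback of local Green's functions, Robin constants in ramification-adjusted parameters) as ``the hard part,'' so the proof is not complete. Moreover, the route you sketch does not work in the generality of the lemma: identifying $\mathrm{Val}(\Gamma(\mathbb{V},\Tc))$ with the maximum of $s\mapsto s^{t}\Gamma(\mathbb{V},\Tc)s$ on $\mathcal{P}$, and characterizing the maximizer by $\Gamma(\mathbb{V},\Tc)e$ having equal entries, rests on Rumely's Lemmas 5.1.6--5.1.7 and requires $\mathrm{Val}(\Gamma)<0$ (equivalently negative definiteness), exactly as in the proof of Lemma \ref{lem:firststep}; but Lemma \ref{lem:longhair} carries no such hypothesis --- in particular $|r|\le 1$ is allowed, in which case your claimed value $-\ln|r|/\deg(f)$ is nonnegative and the quadratic-form characterization of $\mathrm{Val}$ breaks down. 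You would also need to justify applying (\ref{eq:Rumform}) to the set $f^{-1}(B(r))$ (its $\Tc$-capacitability) before writing $(s^{*})^{t}\Gamma(\mathbb{V},\Tc)s^{*}=-\ln|r|/\deg(f)$. All of these difficulties disappear if, as in the paper, you simply invoke the cited pullback formula for $\gamma_{\mathrm{CR}}$ rather than attempt to reconstruct it.
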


\begin{proof} Since each $\mathbb{U} \in \mathcal{U}$ is open, we have for $\X'_1 \in D(\X_1)$ that 
$$\gamma_{\mathrm{F}}(\mathbb{U},\X'_1) = \sup\{|r|^{\mathrm{deg}(f)/m(f)^2}: (f,r) \in W \quad \mathrm{and}\quad f^{-1}(\infty) = m(f) \X'_1\}$$
On the right hand side we have $m(f) |\X'_1| = \mathrm{deg}(f)$.   So by the pullback formula for the Cantor-Rumely capacity (c.f. \cite[p. 4]{Rumely}) we have 
$$(|r|^{\mathrm{deg}(f)/m(f)^2})^{|\X'_1|^{-2}} = |r|^{1/\mathrm{deg}(f)}  = \gamma_{CR}(B(r),\infty)^{1/\mathrm{deg}(f)} = \gamma_{CR}(f^{-1}(B(r)),\Tc).$$
Combining the above equalities shows (\ref{eq:finally}), where 
$\gamma_{CR}(f^{-1}(B(r)),\Tc) \le \gamma_{CR}(\mathbb{U},\Tc)$ because $f^{-1}(B(r)) \subset \mathbb{U}$.
\end{proof}

\begin{lemma}
\label{lem:construct}  With the hypotheses and notations of Lemma
\ref{lem:firststep}, suppose $\mathbb{U} \in \mathcal{U}(\mathbb{E})$  and  $\epsilon > 0$.  Then there is a finite morphism 
$f:\X \to \mathbb{P}^1$ with the following properties.  Let $n(f)$ be the degree of
$f$.  
\begin{enumerate}
\item[i.] The pull back divisor $f^{-1}(\infty)$ has support $\Tc$.
\item[ii.]    
There is an adelic polydisk $B(r)$ in $\mathbb{A}^1$ for
some $r \in \mathcal{R}$ 
such that $f^{-1}(B(r)) \subset \mathbb{U}$ and
$|r|^{1/n(f)} = \gamma_{CR}(f^{-1}(B(r)),\Tc)$
is  within $\epsilon$
of $ \gamma_{\mathrm{CR}}(\mathbb{E},\Tc)$.
\end{enumerate}
In consequence
\begin{equation}
\label{eq:finally2}
\sup_{\X_1' \in D(\X_1)} \gamma_{\mathrm{F}}(\mathbb{U},\X'_1)^{|\X'_1|^{-2}} \ge \gamma_{CR}(\mathbb{E},\Tc)
\end{equation}
\end{lemma}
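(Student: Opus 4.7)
The plan is to extract the required finite morphism by invoking Rumely's construction in the existence direction of the Fekete--Szeg\H{o} theorem for curves, and then to translate its conclusion into the language of this paper via the pullback formula for the Cantor--Rumely capacity that was already used in Lemma \ref{lem:longhair}.

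As a first reduction, I will use the approximation theorems of \cite[\S 4.5]{Rumely} to shrink $\mathbb{U}$ within $\mathcal{U}(\mathbb{E})$ so that the entries of $\Gamma(\mathbb{U},\Tc)$ are as close as I wish to those of $\Gamma(\mathbb{E},\Tc)$. By Lemma \ref{lem:firststep}, $\Gamma(\mathbb{E},\Tc)$ is negative definite with $\exp(-\mathrm{Val}(\Gamma(\mathbb{E},\Tc))) = \gamma_{\mathrm{CR}}(\mathbb{E},\Tc)$; since the unique optimizer $\hat{s}$ depends continuously on the matrix, I may arrange in particular that $\gamma_{\mathrm{CR}}(\mathbb{U},\Tc) > 1$ and $|\gamma_{\mathrm{CR}}(\mathbb{U},\Tc) - \gamma_{\mathrm{CR}}(\mathbb{E},\Tc)| < \epsilon/2$.

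Next I will appeal to Rumely's constructive Fekete--Szeg\H{o} theorem (\cite[Chs.\ 5--6]{Rumely}) in its precise open-set form: given that $\gamma_{\mathrm{CR}}(\mathbb{U},\Tc) > 1$, Rumely's adelic potential theory produces, by means of pseudopolynomials that realize approximately the value of the Green's quadratic form at each place, a rational function $f \in F(\X)$ whose polar divisor is supported on $\Tc$ together with an $r \in \mathcal{R}$ such that $f^{-1}(B(r)) \subset \mathbb{U}$ and $|r|^{1/\deg(f)}$ lies within $\epsilon/2$ of $\gamma_{\mathrm{CR}}(\mathbb{U},\Tc)$. Galois invariance of $\mathbb{E}$ and $\Tc$ will let me produce $f$ over $F$. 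Since the polar divisor is finite, $f$ is separable and defines a finite morphism $f:\X \to \mathbb{P}^1$, so (i) and the containment in (ii) are immediate. The pullback formula for the Cantor--Rumely capacity \cite[p.\ 4]{Rumely} will then give
\[
\gamma_{\mathrm{CR}}(f^{-1}(B(r)),\Tc) \;=\; \gamma_{\mathrm{CR}}(B(r),\infty)^{1/\deg(f)} \;=\; |r|^{1/\deg(f)},
\]
placing this quantity within $\epsilon$ of $\gamma_{\mathrm{CR}}(\mathbb{E},\Tc)$ and finishing (ii). To deduce (\ref{eq:finally2}), I will invoke Lemma \ref{lem:longhair}: its left-hand side equals $\sup_{(f,r) \in W} \gamma_{\mathrm{CR}}(f^{-1}(B(r)),\Tc)$, and the pair just constructed contributes at least $\gamma_{\mathrm{CR}}(\mathbb{E},\Tc) - \epsilon$; letting $\epsilon \to 0$ yields the inequality.

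The main obstacle is the middle step: passing from the capacity bound $\gamma_{\mathrm{CR}}(\mathbb{U},\Tc) > 1$ to a single $F$-rational function $f$ that controls the preimage at every place simultaneously is the entire substance of the existence half of the Fekete--Szeg\H{o} theorem for curves, and rests on Rumely's full adelic potential-theoretic machinery. Everything else --- the local-to-global approximation of Green's matrices, the pullback identity for Cantor--Rumely capacity, and the assembly of (\ref{eq:finally2}) from Lemma \ref{lem:longhair} --- is formal once this input is granted. Indeed, the absence of an archimedean version of Theorem \ref{thm:main} is precisely what prevents a purely internal proof within this paper of the higher-dimensional analog raised in Question \ref{eq:qup}.
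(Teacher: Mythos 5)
Your overall architecture is the same as the paper's: obtain $f$ from Rumely's Fekete--Szeg\H{o} construction, use the pullback formula $\gamma_{\mathrm{CR}}(f^{-1}(B(r)),\Tc) = |r|^{1/\deg(f)}$, and then deduce (\ref{eq:finally2}) from the first equality in (\ref{eq:finally}) of Lemma \ref{lem:longhair} by letting $\epsilon \to 0$; that formal outer layer is fine. The gap is in the middle step, which is exactly where the lemma's content lies. You cite a ``precise open-set form'' of Rumely's theorem asserting that whenever $\gamma_{\mathrm{CR}}(\mathbb{U},\Tc)>1$ there exist $f$ and $r$ with $f^{-1}(B(r))\subset\mathbb{U}$ and $|r|^{1/\deg(f)}$ within $\epsilon/2$ of the capacity. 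No such statement is available off the shelf: what Rumely states (\cite[Theorem 6.2.2]{Rumely}) is only the qualitative version, namely an $f$ with polar support $\Tc$ and $f^{-1}(B(r_0))\subset\mathbb{U}$ for the trivial radius $r_0\equiv 1$, i.e. $|r_0|^{1/n(f)}=1$, which says nothing about approaching $\gamma_{\mathrm{CR}}(\mathbb{E},\Tc)$. The paper's proof of this lemma consists precisely of extracting the quantitative refinement by reworking Rumely's argument: tracking the local constants $R_w$ from Step 1, arranging $\ln(R)/n$ to approach $-\mathrm{Val}(\Gamma(\mathbb{E}_L,\Tc))$ in both the function-field and number-field cases, strengthening the bound $|f_L(z)|_w>1$ used in Step 5 to $|f_L(z)|_w\ge \tfrac12 R_w^{d}$ and letting the auxiliary exponent $d$ tend to infinity, and finally descending from the normal extension $L$ to $F$ via the base-change property of the Cantor--Rumely capacity. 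Treating that refinement as a known black box (you do flag it as ``the main obstacle'') leaves the essential part of the lemma unproved.

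A secondary point: your preliminary reduction, shrinking $\mathbb{U}$ to some $\mathbb{U}'$ with $\gamma_{\mathrm{CR}}(\mathbb{U}',\Tc)$ within $\epsilon/2$ of $\gamma_{\mathrm{CR}}(\mathbb{E},\Tc)$, is not needed in the paper's route, which keeps the Green's matrix of $\mathbb{E}$ (pulled back to $L$) throughout and gets closeness to $\gamma_{\mathrm{CR}}(\mathbb{E},\Tc)$ directly. Moreover, as you set it up, the quantitative construction would have to be applied with the open set $\mathbb{U}'$ playing the role of the capacitable set, whereas Rumely's hypotheses place the capacitability on $\mathbb{E}$ and let $\mathbb{U}$ be merely an adelic neighborhood; so this rearrangement introduces an extra capacitability issue without removing any of the real difficulty.
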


\begin{proof}  By Lemma \ref{lem:firststep}, our hypotheses imply
that $\gamma_{\mathrm{CR}}(\mathbb{E},\Tc) > 1$. 
The result stated by Rumely in \cite[Theorem 6.2.2]{Rumely}
is that there is a finite morphism $f:\X \to \mathbb{P}^1$ for which (i) holds
and such that $f^{-1}(B(r_0)) \subset \mathbb{U}$ when $r_0 \in \mathcal{R}$
has $r_0(v) = 1$ for all $v \in M(F)$.  By reading the proof  closely,
one sees that Rumely shows the sharper quantitative result in  (ii).  We will
make a few comments about how to check this before leaving the details
to the reader.  

The preliminary reductions started in Step 0 on page 395 
of \cite{Rumely} are not completed until Step 7 on page 412. To check
that the reduction step  which involves a  base change to a finite normal extension 
$L$ of $K$ still applies, one needs
to use the behavior of the Cantor-Rumely capacity under such base changes
which is stated as (B) on page 4 of \cite{Rumely}.   In Step 1 on page 396, 
constants $R_w$ are defined for each place $w$ in $L$ in the union $S = S_0 \cup \{w_0\}$
of a large finite set of place $S_0$ with a nonarchimedean place $w_0$ 
where the $w_0$-component $E_{w_0}$ of $\mathbb{E}$ is $\Tc$-trivial.
Define $R_w = 1$ for all $w \in M(L) \smallsetminus  S$ and let $\Gamma = \Gamma(\mathbb{E}_L,\Tc)$
be the Green's matrix associated to the pullback of $\mathbb{E}$ to $L$.    
Define $R = \prod_{w \in M(L)} R_w$.  

In the function field case, Rumely
arranges by the end of Case A of Step 1 on page 399 of \cite{Rumely} that
$\ln(R)/n =  - \mathrm{Val}(\Gamma)$ when $n$ is the common degree of
the functions $h_w(z)$ constructed in this step.  In the number field case,
he shows by the end of Case B of Step 1 on 403 that one can arrange for
$\ln(R)/n$ to be as close as one likes to $- \mathrm{Val}(\Gamma)$.  Steps
2, 3 and 4 of the proof proceed as stated.  In case A of step 5 on page 411
of \cite{Rumely}, Rumely shows the lower  bound
$$|f_L(z)|_w  \ge \frac{1}{2} |h(z)^d|_w \ge \frac{1}{2} R_w^d  > 1$$
for $z \in \mathcal{Y}_w$ with the notations introduced there.
Here $f_L:L \otimes_K \X \to \mathbb{P}^1_L$
is the morphism we need to construct when $\X$ is replaced by $L \otimes_K \X$
via the first reduction step.  
Rumely uses only the lower bound $|f_L(z)|_w > 1$ at this step
in the proof.  One has to carry along the sharper bound
$|f_L(z)|_w  \ge \frac{1}{2} R_w^d$ and let $d$ go to infinity,
where the degree of $f_L(z)$ is $d$ times the degree of the function
$h(z)$ appearing in equation (79) on page 411.  Since $\ln(R )/n= \sum_{w \in M(L)} \ln(R_w)/n$
can be made as close as we like to $- \mathrm{Val}(\Gamma)$, we find that
$f_L$ satisfies the counterparts for $\X_L$ of conditions (i) and (ii) because
  $ \gamma_{\mathrm{CR}}(\mathbb{E}_L,\Tc) = 
\mathrm{exp}(-\mathrm{Val}(\Gamma))$.  We get an $f$ as in (i) and (ii)
via the reduction step.  Letting $f^{-1}(\infty) = \X'_1$, we see that 
(\ref{eq:finally2}) follows from the first equality in (\ref{eq:finally}) and from 
that fact that $\epsilon$ was an arbitrary positive number in condition (ii)
of Lemma \ref{lem:construct}.  
 \end{proof}

\noindent {\bf Completion of the proof of Theorem  \ref{eq:curve}.}
\medbreak
In view of Lemma \ref{lem:firststep}, it will suffice to show that \begin{equation}
\label{eq:formula}
\gamma^+_F(\mathbb{E},\Tc) = \inf_{\mathbb{U} \in \mathcal{U}} \left \{ \sup_{\X'_1 \in D(\X_1)} \gamma_{\mathrm{F}}(\mathbb{U},\X'_1)^{|X'_1|^{-2}}\right \}.
\end{equation}
equals $\gamma_{\mathrm{CR}}(\mathbb{E},\Tc)$.  
Lemma \ref{lem:longhair} implies that (\ref{eq:formula})
is bounded above by 
$$\inf_{\mathbb{U} \in \mathcal{U}} \gamma_{\mathrm{CR}}(\mathbb{U},\Tc)
$$
and this equals $\gamma_{\mathrm{CR}}(\mathbb{E},\Tc)$ by the approximation
Theorems of \cite[\S 4.5]{Rumely}.  Thus $\gamma_{\mathrm{F}}^+(\mathbb{E},\Tc)  \le \gamma_{\mathrm{CR}}(\mathbb{E},\Tc)$. Lemma \ref{lem:construct}
and (\ref{eq:formula}) give $\gamma_{\mathrm{F}}^+(\mathbb{E},\Tc)  \ge \gamma_{\mathrm{CR}}(\mathbb{E},\Tc)$, which completes the proof.
 
 \begin{example}
 \label{ex:counter} {\rm We give an example showing that if $S_\gamma^+(\mathbb{E},\Tc)  < 1$ then one may
 have $\gamma_{\mathrm{F}}^+(\mathbb{E},\Tc) = 0$.  Suppose that $F = \mathbb{Q}$, $\X = \mathbb{P}^1$ and $\X_1$ is the point $\infty$.  Define $\mathbb{E} = \prod_{v \in M(\mathbb{Q})} E_v$ by letting $E_v = \mathbb{A}^1(O_{\mathbb{C}_v})$ if $v$ is finite, and by letting $E_v$ the open disc of radius $1/2$ about
 $1/2$ in $\mathbb{A}^1(\mathbb{C}_{v_\infty}) = \mathbb{C}$ when $v_\infty$ is the archimedean place.  Then
 $S_\gamma(\mathbb{E},\infty) = S_\gamma^+(\mathbb{E},\infty)= 1/2$.    Suppose  $f:\X \to \mathbb{P}^1$ is a finite morphism such that  $\mathrm{supp}(f^{-1}(\infty)) = \{ \infty\}$ and
 $f^{-1}(B(r)) \subset \mathbb{E}$ for some $r \in \mathbb{R}$ with $|r| > 0$.  Then $f^{-1}(0)$ would contain an element of $\overline{\mathbb{Q}}$ having all its conjugates in $\mathbb{E}$, but no such elements exist.  Hence 
 $\gamma_{\mathrm{F}}(\mathbb{E},\infty) = \gamma_{\mathrm{F}}^+(\mathbb{E},\infty) = 0$.  }
 \end{example}

\subsection{\bf RL-domains for the finite adeles}

We we will assume in this section the notation and hypotheses
of Theorem \ref{thm:urk}.  We identify $H^0(\X,O_{\X}(\X_1))$
with the $F$-subspace of the function field $F(\X)$ whose nonzero
elements $f$ have the property that $\mathrm{div}(f) + \X_1 \ge 0$.  We may assume that
the projective embedding $\X \to \mathbb{P}^t$
is assocated to a generating set $\{f_0,\ldots,f_t\}$ for  $H^0(\X,O_{\X}(\X_1))$
 such that $f_0 = 1$ and the $f_i$
for $i > 0 $ are nonconstant.  Our hypothesis is equivalent to the statement that
for $v \in M_{\mathrm{f}}(F)$ one has 
\begin{equation}
\label{eq:urkU}
U_v = \{x \in \X(\mathbb{C}_v): |f_i(x)|_v \le 1 \quad \mathrm{for \ all } \quad i\}.
\end{equation}

Let $O_F$ be the ring of integers of $F$ in the number field case and let $O_F$ be the 
ring of functions regular off of $M_\infty(F)$ in the function field case. 
Let $A = \bigoplus_{j = 0}^\infty A_j$ be the graded $O_F$-algebra
for which $A_0 = O_F$, $A_1$ is the $O_F$-submodule of $H^0(\X,O_{\X}(\X_1)) \subset F(\X)$
generated by $f_0,\ldots,f_t$ and $A_j$ is the $O_F$-module generated by all products
of $j$-elements of $A_1$ if $j \ge 2$.  We then have a projective flat model 
$\Xc$ of $\X$ over $O_F$ given by $\Xc = \mathrm{Proj}(A)$.  This
comes with a canonical very ample line bundle $O_\Xc(1)$ whch is
associated to the effective horizontal very ample Cartier divisor $D$
whose ideal sheaf is $O_\Xc(-1)$.  The zero locus of the degree one element $f_0$ of $A$ is
$D$.  This implies that the general fiber of $D$ is $\X_1$ and that $D$ 
is the Zariski closure of $\X_1$.  In particular, $D$ is horizontal.  The description (\ref{eq:urkU}) 
implies that for $v \in M_{\mathrm{f}}(F)$, the points $x \in \Xc(\mathbb{C}_v) = \X(\mathbb{C}_v)$
which lie in $U_v$ are exactly those for which the reduction of $x$ 
at $v$ does not lie on the reduction of $D$.  

We now apply Theorem \ref{thm:main} to conclude that there
is a finite morphism $\pi:\Xc \to \mathbb{P}^d_{O_F}$ such that
the pullback $\pi^{-1}(H)$ of the hyperplane $H = \{y _0 = 0\}$
in $\mathbb{P}^d_{O_F}$  is a positive multiple of $D$.  
The above description of $U_v$ via the reduction of $D$ at $v  \in M_{\mathrm{f}}(F)$
now implies $U_v = \pi^{-1}(B_v(1))$. Hence the restriction 
$\pi_{\mathbb{U}_{\mathrm{f}}}:\X \to \mathbb{P}^d_F$ of $\pi$ to the generic fiber
$\X$ of $\Xc$ has the properties stated in Theorem \ref{thm:urk}.
Thus if $U_v$ for $v \in M_\infty$ is a large enough open set
such that $\mathbb{U} = \mathbb{U}_{\mathrm{f}} \times \prod_{v \in M_\infty(F)} U_v$
is an open adelic set satisfying the standard hypotheses, there
will be an adelic polydisk $B(r)$ in $A^d$ such that $|r| > 1$
and $\pi^{-1}(B(r)) \subset \mathbb{U}$.  Therefore $\gamma_{\mathrm{F}}(\mathbb{U},\X_1) > 1$, 
which completes
the proofs of Theorem \ref{thm:urk} and Corollary \ref{cor:app}.


\begin{thebibliography}{C-P-S1 }

\bibitem{Artin}M.~Artin, Lipman's proof of resolution of singularities for surfaces,
in:  Arithmetic Geometry, G. Cornell and J. H. Silverman editors, Springer-Verlag,
New York (1986), 267--287.


\bibitem{C} T.~Chinburg, Capacity theory on varieties,  Compositio Math.  80  (1991),  no. 1, 75--84. 

\bibitem{CPT} T.~Chinburg, G.~Pappas and M.~J.~Taylor, Higher adeles and nonabelian Riemann-Roch, preprint.


\bibitem{Fulton}W.~Fulton.  Intersection Theory.  $2^{nd}$ edition,  Ergebnisse der Mathematik und ihrer Grenzgebiete. 3. Folge. A Series of Modern Surveys
in Mathematics, 2. Springer-Verlag, Berlin
(1998).

\bibitem{Gabber}O.~Gabber, Q.~Liu and D.~Lorenzini, Hypersurfaces of projective schemes and a moving lemma, preprint (July, 2011).

\bibitem{Green}B.~Green, Geometric families of constant reductions and the Skolem property, Transactions of the A. M. S., Vol.350, no. 4 (1998), 1379--1393.

\bibitem{Green2}B.~Green, On curves over valuation rings and morphisms to $P^1$.  J. Number Theory  59  (1996),  no. 2, 262--290.


\bibitem{GD} A.~Grothendieck and J.~Dieudonn\'e, \'El\'ements de g\'eom\'etrie alg\'ebrique, IV.2 and IV.3, Publ. Math.
IHES {\bf 24} and {\bf 28} (1965).



\bibitem{LR} C.~F.~Lau and R.~Rumely, Arithmetic capacities on $\mathbb{P}^N$,
Math. Z. 215, (1994) 533-560.

\bibitem{L}S.~Lichtenbaum, Curves over discrete valuation rings. 
Amer. J. Math. 90, (1968) 380--405.

\bibitem{Lipman}J.~ Lipman, Desingularization of two-dimensional schemes, Ann. Math. 107 (1978), 151-207.

\bibitem{Mat}H.~Matsumura, Commutative Algebra.  $2^{nd}$ edition.  Benjamin/Cummings,
Reading Mass. (1980).

\bibitem{MB} L.~Moret-Bailly, Groupes de Picard et probl\`emes de Skolem I.
Ann. E. N. S., $4^{e}$ s\'erie, tome 22, no. 2 (1989), 161--179.

\bibitem{P} C.~Petsche, L.~Szpiro and M.~ Tepper, Isotriviality is equivalent to potential good reduction for endomorphisms of $\mathbb{P}^n$ over function fields. J. Algebra 322 (2009), no. 9, 3345Ð3365.

\bibitem{Rumely}R.~S.~Rumely, Capacity Theory on Algebraic Curves.  Springer Lecture Notes in Mathematics
1378. Springer-Verlag, Berlin-Heidelberg-New York (1989).

\bibitem{RLV} R.~S.~Rumely, C.~F.~Lau and R.~Varley, 
Existence of the sectional capacity. 
Mem. Amer. Math. Soc. 145 (2000), no. 690, viii+130 pp. 

\bibitem{R2} R.~S.~Rumely,  On the relation between Cantor's capacity and the sectional capacity,  Duke Math. J.  70  (1993),  no. 3, 517--574.

 
\bibitem{Sz} L.~Szpiro et al, S\'eminarie sur les pinceaux de courbes de genre au moins deux,
Ast\'erisque, vol. 86.

\bibitem{Vas}W.~V.~Vasconcelos, On projective modules of finite rank, Proc. Amer. Math. Soc. 22 (1969), 430Ð433.


\end{thebibliography}
\end{document}